\documentclass[a4paper,11pt]{article}
\usepackage{graphicx}
\usepackage[T1]{fontenc}
\usepackage[utf8]{inputenc}
\usepackage[english]{babel}
\usepackage{amsmath,amsthm,amssymb,amsfonts}
\usepackage{enumitem}
\usepackage{lmodern}
\usepackage{color}
\usepackage{cite}
\usepackage{algorithm}
\usepackage{algorithmic}
\usepackage{subcaption}
\usepackage{tikz}
\usepackage{pgfplots}
\usepackage{mathtools}

\usepackage{geometry}
  \geometry{left=2.5cm}
  \geometry{right=2cm}
  \geometry{top=2cm}
  \geometry{bottom=2cm}

\definecolor{darkgreen}{rgb}{0.1,0.5,0.00}

\usepackage{hyperref}
\hypersetup{
	colorlinks=true,        
	linkcolor=blue,         
	citecolor=darkgreen,    
	urlcolor=blue           
}

\newtheorem{theorem}{Theorem}[section]
\newtheorem{lemma}[theorem]{Lemma}
\newtheorem{proposition}[theorem]{Proposition}
\newtheorem{corollary}[theorem]{Corollary}
\theoremstyle{remark}
\newtheorem{remark}[theorem]{Remark}

\newcommand{\n}[1]{\left\|#1 \right\|}
\renewcommand{\a}{\alpha}
\renewcommand{\b}{\beta}
\newcommand{\la}{\lambda}
\newcommand{\e}{\varepsilon}

\newcommand{\R}{\mathbb R}
\newcommand{\N}{\mathbb N}
\newcommand{\lr}[1]{\left\langle #1\right\rangle}

\DeclareMathOperator{\dom}{dom}
\DeclareMathOperator{\gra}{gra}
\newcommand{\Hilbert}{\mathcal{H}}
\newcommand{\setto}{\rightrightarrows}

\title{A Forward-Backward Splitting Method for Monotone Inclusions Without Cocoercivity}
\author{Yura Malitsky\thanks{Institute for Numerical and Applied Mathematics,
                             University of G\"ottingen,
                             37083 G\"ottingen, Germany.
                             Email:~\href{mailto:y.malitsky@gmail.com}{y.malitsky@gmail.com}}
       \and
       Matthew K. Tam\footnotemark[1]\hspace{1ex}${}^,$\hspace{0.1ex}\thanks{School of Mathematics \& Statistics,
                             The University of Melbourne,
                             Parkville VIC 3010, Australia.
                             Email:~\href{href:matthew.tam@unimelb.edu.au}{matthew.tam@unimelb.edu.au}}
}

\begin{document}
\maketitle

\begin{abstract}
    In this work, we propose a simple modification of the
    forward-backward splitting method for finding a zero in the sum of
    two monotone operators. Our method converges under the same
    assumptions as Tseng's forward-backward-forward method, namely, it
    does not require cocoercivity of the single-valued
    operator. Moreover, each iteration only uses one forward
    evaluation rather than two as is the case for Tseng's
    method. Variants of the method incorporating a linesearch,
    relaxation and inertia, or a structured three operator inclusion are
    also discussed.
\end{abstract}

\textbf{Keywords.} forward-backward algorithm $\cdot$ Tseng's method
$\cdot$ operator splitting

\medskip

\textbf{MSC2010.} 49M29 $\cdot$ 
                 90C25 $\cdot$ 
                 47H05 $\cdot$         
                 47J20 $\cdot$ 
                 65K15 

\section{Introduction}\label{s:intro}
In this work, we propose an algorithm for finding a zero in the sum of two monotone operators in a (real) Hilbert space $\Hilbert$. Specifically, we consider the monotone inclusion problem
  \begin{equation}\label{eq:zero 2sum}
   \text{find}~x\in\Hilbert\text{ such that }0\in (A+B)(x),
  \end{equation}
where $A\colon \Hilbert\setto\Hilbert$ and $B\colon \Hilbert\to\Hilbert$ are (maximally) monotone operators with $B$ (locally) Lipschitz continuous such that $(A+B)^{-1}(0)\neq\varnothing$. Inclusions of the form specified by \eqref{eq:zero 2sum} arise in numerous problems of fundamental importance in mathematical optimization, either directly or through an appropriate reformulation. In what follows, we provide some motivating examples.

\textbf{Convex minimization:} Consider the minimization problem
	$$ \min_{x\in\Hilbert}f(x)+g(x),$$
	where $f\colon\Hilbert\to(-\infty,+\infty]$ is proper, lower semicontinuous (lsc), convex and $g\colon\Hilbert\to\mathbb{R}$ is convex with (locally) Lipschitz continuous gradient denoted $\nabla g$. The solutions to this minimization problem are precisely the points $x\in\Hilbert$ which satisfy the \emph{first order optimality condition:}
	  \begin{equation}
	  \label{eq:constrained min}
	  0\in \left(\partial f+\nabla g\right)(x),
	  \end{equation}
	where $\partial f$ denotes the \emph{subdifferential} of $f$. Clearly \eqref{eq:constrained min} is of the form specified by \eqref{eq:zero 2sum}.

\textbf{General monotone inclusions:} Consider the inclusion problem

\begin{equation}\label{eq:zero 2sum_lin}
	  \text{find}~x\in\Hilbert_1\text{ such that }0\in (A+K^*BK)(x),
	  \end{equation}
	where $A\colon\Hilbert_1\setto\Hilbert_1$ and $B\colon\Hilbert_2\setto\Hilbert_2$ are maximally monotone operators, and $K\colon\Hilbert_1\to\Hilbert_2$ is a linear, bounded operator with adjoint $K^*$.
	As was observed in \cite{briceno2011monotone+,bot2013primal}, solving \eqref{eq:zero 2sum_lin} can be equivalently cast as the following monotone inclusion posed in the product space:
	\begin{equation}
	\label{eq:zero 2sum2}
	\text{find}~\binom{x}{y}\in\Hilbert_1\times\Hilbert_2 \text{ such that
	}\binom{0}{0}\in
	\left(
	\begin{bmatrix}
	A & 0\\
	0 & B^{-1}
	\end{bmatrix}
	+    \begin{bmatrix}
	0 & K^*\\
	-K & 0
	\end{bmatrix}\right)
	\binom{x}{y}.
	\end{equation}
	Notice that the first operator in \eqref{eq:zero 2sum2} is maximally monotone whereas the second is bounded and linear (in particular, it is Lipschitz continuous with full domain). Consequently, \eqref{eq:zero 2sum2} is also of the form specified by \eqref{eq:zero 2sum}.

    Another variant of \eqref{eq:zero 2sum} is the three operator inclusion
	\begin{equation}
	\label{eq:zero 3sum}
	\text{find}~x\in\Hilbert\text{ such that }0\in (A+B+C)(x),
	\end{equation}
	where the operators $A$ and $B$ are as before and $C\colon \Hilbert\to \Hilbert $ is $\beta$-cocoercive. Problems with this structure have been studied in \cite{davis2017three,briceno:davis}.

\textbf{Saddle point problems and variational inequalities:} Many convex optimization problems can be formulated as the \emph{saddle point problem}
	\begin{equation}
	\label{eq:saddle}
	\min_{x\in \Hilbert}\max_{y\in \Hilbert} g(x) + \Phi(x,y) - f(y),
	\end{equation}
	where $f, g\colon \Hilbert \to (-\infty, +\infty]$ are proper,
        lsc, convex functions and $\Phi\colon \Hilbert\times \Hilbert
        \to \R$ is a smooth convex-concave function.
	Problems of this form naturally arise in machine learning,
        statistics, etc., where the dual (maximization) problem comes
        from either dualizing the constraints in the primal problem or
        from using the Fenchel--Legendre transform to leverage a
        nonsmooth composite part.  Through its first-order optimality
        condition, the saddle point problem \eqref{eq:saddle} can
        expressed as the monotone inclusion
	\begin{equation}
	\label{eq:saddle-incl}
	\text{find}~\binom{x}{y}\in\Hilbert\times \Hilbert \text{ such that
	}\binom{0}{0}\in
	\binom{\partial g(x)}{\partial f(y)}
	+ \binom{\phantom{-}\nabla_x \Phi(x,y)}{-\nabla_y \Phi(x,y)},
	\end{equation}
	which is of the form specified by \eqref{eq:zero 2sum}.
	By using the definitions of the respective subdifferentials, \eqref{eq:saddle-incl} can also be expressed in terms of the \emph{variational inequality (VI)}: find $z^* = (x^*, y^*)^\top \in \Hilbert\times
	\Hilbert$ such that
	\begin{equation}
	\label{eq:vip}
	\lr{B(z^*),z-z^*} + g(x) - g(x^*) - f(y) + f(y^*) \geq 0 \quad	\forall z=\binom{x}{y} \in \Hilbert\times\Hilbert,
	\end{equation}
	 where $B(x,y) := \left( \nabla_x \Phi(x,y),-\nabla_y
             \Phi(x,y) \right)^\top$.

\bigskip

\emph{Splitting algorithms} are a class of methods which can be used
to solve \eqref{eq:zero 2sum} by only invoking each operator
individually rather than their sum directly. The individual steps
within each iteration of these methods can be divided into two
categories: \emph{forward evaluations} in which the value of a
single-valued operator is computed, and \emph{backward evaluations} in
which the \emph{resolvent} of an operator computed. Recall that the
resolvent of an operator $A$ is given by $J_{A}:=(I+A)^{-1}$ where
$I\colon \Hilbert\to\Hilbert$ denotes the identity operator.

When the resolvents of both of the involved operators can
be easily computed, there are various algorithms in the literature which are
suitable for solving \eqref{eq:zero 2sum} with $B$ not necessarily
single-valued. The best known example of such an algorithm is the
\emph{Douglas--Rachford method} \cite{lions-mercier,svaiter2011weak}. In practice,
however, it is usually not the case that both resolvents can be readily
computed and thus in order to efficiently deal with realistic
problems, it is often necessary to impose further structure on the
operators in \eqref{eq:zero 2sum}. Splitting methods which do not require computation of two
 resolvents are therefore of practical interest.

 The best-known splitting method for solving the inclusion
 \eqref{eq:zero 2sum} when $B$ is single-valued is the
 \emph{forward-backward method}, called so because each iteration
 combines one forward evaluation of $B$ with one backward evaluation
 of $A$. More precisely, the method generates a sequence according to
  \begin{equation}
  \label{eq:forward-backward}
    x_{k+1} = J_{\lambda A}(x_k-\lambda B(x_k)) \quad \forall k\in\mathbb{N},
  \end{equation}
  and converges weakly to a solution provided the operator
  $B\colon \Hilbert\to\Hilbert$ is \emph{$1/L$-cocoercive} and
  $\lambda\in (0,2/L)$.  Recall that $B\colon \Hilbert\to\Hilbert$ is
  \emph{$\beta$-cocoercive} if
 $$ \lr{x-y,B(x)-B(y)} \geq \beta \n{B(x)-B(y)}^2\quad\forall x,y\in\Hilbert. $$
 Cocoercivity of an operator is a stronger property than Lipschitz
 continuity and hence can be difficult to satisfy for general monotone
 inclusions. For instance, apart from the trivial case when $K=0$, the
 skew-symmetric operator in \eqref{eq:zero 2sum2} is never
 cocoercive. Furthermore, without cocoercivity, convergence of
 \eqref{eq:forward-backward} can only be guaranteed in the presence of
 similarly strong assumptions such as strong monotonicity of $A+B$
 \cite{chen1997convergence}, or at the cost of incorporating a
 backtracking strategy \cite{bello2015variant} (even when the
 Lipschitz constant is known).

 In order to relax the cocoercivity assumption, Tseng \cite{T2000}
 proposed a modification of the forward-backward algorithm, known as
 the \emph{Tseng's method} or the \emph{forward-backward-forward
 method}, which only requires Lipschitzness of $B$ at the expense of
 an additional forward evaluation. Applied to \eqref{eq:zero 2sum},
 Tseng's method generates sequences according to
\begin{equation}\label{eq:tseng}
\left\{\begin{aligned}
y_k     &= J_{\lambda A}(x_k-\lambda B(x_k)) \\
x_{k+1} &= y_k-\la B(y_k)+\la B(x_k) \\
\end{aligned}\right.\quad \forall k\in\mathbb{N},
\end{equation}
and converges weakly provided $B$ is $L$-Lipschitz and $\la\in(0,1/L)$.

In this work, we introduce and analyze a new method for solving
\eqref{eq:zero 2sum} which converges under the same assumptions as
Tseng's method, but whose implementation requires only one forward
evaluation per iteration instead of two. For a fixed stepsize
$\lambda>0$, the proposed scheme can be simply described as
   \begin{equation}\label{eq:the iteration intro}
   x_{k+1} = J_{\la A}\bigl(x_k - 2 \la B(x_k) + \la B(x_{k-1})\bigr)\quad\forall k\in\mathbb{N},
   \end{equation}
   and converges weakly if $B$ is $L$-Lipschitz and the stepsize is
   chosen to satisfy $\la<\frac{1}{2L}$.  We refer to this scheme as
   the \emph{forward-reflected-backward method}. It is worth noting
   that the analysis of our method is entirely different than existing
   schemes, and hence is of interest in its own right. In particular,
   the sequence generated by the method is not Fej{\'e}r monotone, although
   it does satisfy a quasi-Fej{\'e}r property \cite{combettes2001quasi}.
   Moreover, there are
   relatively few fundamentally different alternatives to Tseng's
   forward-backward-forward algorithm for solving inclusions in the
   form of \eqref{eq:zero 2sum} without cocoercivity
   \cite{johnstone2018projective,csetnek2019shadow,rieger2020backward}.

We also remark that our method is of particular interest in the
setting of the saddle point problem \eqref{eq:saddle-incl}. Indeed,
one of the first splitting techniques for solving \eqref{eq:saddle} is
the famous \emph{Arrow--Hurwicz algorithm} \cite{arrow1958studies}
which suffers from the shortcoming of requiring strict assumptions to
ensure convergence. This was remedied in late 70's when various
modification of the algorithm were proposed
\cite{korpel:76,antipin1978method,popov:80} which turned out to be
applicable not only to saddle point problems, but also to more general
variational inequalities. Note also that the simplest case of
\eqref{eq:saddle} occurs when $\Phi$ is a bilinear form and gives rise
to the popular \emph{primal-dual algorithm}, first analyzed by
Chambolle~\&~Pock~\cite{chambolle2011first}. In a recent preprint~\cite{hamedani2018primal}, a variant of this
algorithm, which can be applied when $\Phi$ is not necessarily
bilinear, was considered. Such an extension is a significant
improvement as it provides an approach to the saddle point problem
that is different from variational inequality methods.  An interesting common feature of the methods in
\cite{chambolle2011first,hamedani2018primal,malitsky2018golden,csetnek2019shadow} as
well as the one presented here is that their respective iterations
include a ``reflection term'' in which the value of an operator at the
previous point is subtracted from twice its value at the current
point.

In addition to general interest in monotone inclusions from
optimization community described above, a new surge has appeared in
machine learning research,
see~\cite{daskalakis2018training,ryu2019ode,mertikopoulos2018optimistic,gidel2018variational,mishchenko2019revisited}
and the references therein. In these works, the authors design
algorithms for training \emph{generative adversarial networks (GANs)}~\cite{goodfellow2014generative}. Although this takes the form of
a nonconvex-nonconcave min-max problem,
the main workhorses are based on classical algorithms for solving
monotone variational inequalities. Thus,
we believe that new algorithmic ideas, even for the monotone case, may have
some impact in this field as well.

\bigskip

The remainder of this paper is organized as follows. In
Section~\ref{s:FoRB splitting}, we introduce our method and prove its
convergence (Theorem~\ref{th:main}). In Section~\ref{s:linear
    convergence}, this result is refined to show that convergence is
linear whenever one of the operators is strongly monotone. In
Section~\ref{s:linesearch}, we incorporate a linesearch procedure into
the method (Theorem~\ref{th:linesearch}). In Section~\ref{s:relaxed
    inertial}, we consider a relaxed inertial version
(Theorem~\ref{th:relaxed inertial}) and, in
Section~\ref{s:3op}, we propose a variant which solves the three
operator inclusion \eqref{eq:zero 3sum}. Finally, in Section~\ref{s:between}, we analyze
a version of the stochastic algorithm which can be considered in between the forward-backward method
 and our proposed method.

\section{Forward-reflected-backward splitting}\label{s:FoRB splitting}
Recall that a set-valued operator $A\colon \Hilbert\setto\Hilbert$ is \emph{monotone} if
$$ \lr{x-y,u-v}\geq 0\quad \forall (x,u),(y,v)\in\gra A, $$
where $\gra A = \{(x,y)\in\Hilbert\times\Hilbert : y\in A(x)\}$
denotes the \emph{graph} of $A$.  A monotone operator is
\emph{maximally monotone} if its graph is not properly contained in
the graph of any other monotone operator. The \emph{resolvent} of a
maximally monotone operator $A\colon \Hilbert\setto\Hilbert$, defined by
$J_A:=(I+A)^{-1}$, is an everywhere single-valued operator
\cite{BC2010}. A single-valued operator $B\colon \Hilbert\to\Hilbert$ is
\emph{$L$-Lipschitz} if $\n{B(x)-B(y)} \leq L\n{x-y}$ for all
$x,y\in\Hilbert$.

In this section, we consider the problem of finding a point
$x\in\Hilbert$ such that
\begin{equation}\label{eq:inclusion soln}
0 \in (A+B)(x),
\end{equation}
where $A\colon \Hilbert\setto\Hilbert$ is maximal monotone, and
$B\colon \Hilbert\to\Hilbert$ is monotone and $L$-Lipschitz. Given initial
points $x_0,x_{-1}\in\Hilbert$, we consider the scheme
\begin{equation}\label{eq:the iteration}
x_{k+1} = J_{\la_k A}\bigl(x_k - \la_k B(x_k) -  \la_{k-1}(B(x_k)-B(x_{k-1})\bigr) \quad\forall k\in\mathbb{N},
\end{equation}
where $(\lambda_k)\subseteq\mathbb{R}_+$ is a sequence
of step-sizes (starting with from $k=-1$). Note that, each iteration of this scheme requires one
forward evaluation and one backward evaluation. Using the definition
of the resolvent $J_{\la_k A}=(I+\lambda_kA)^{-1}$, \eqref{eq:the
    iteration} can be equivalently expressed as the inclusion
\begin{equation}\label{eq:inclusion iteration}
x_{k+1}-x_k + \la_k B(x_k) + \la_{k-1}\left( B(x_k)-B(x_{k-1})\right) \in -\lambda_{k} A(x_{k+1})\quad\forall k\in\mathbb{N}.
\end{equation}

Before turning our attention to the convergence analysis of this
method, we first note some special cases in which it recovers known
methods.
\begin{remark}[Special cases of \eqref{eq:the iteration}]
    We consider three cases in which the proposed algorithm reduces or
    is equivalent to known methods. For simplicity, we only consider
    the fixed step-size case (\emph{i.e.,} $\exists\lambda>0$ such
    that $\lambda_k=\lambda$ for all $k$). In this case,
    \eqref{eq:the iteration} can be expressed compactly as
   \begin{equation}
   \label{eq:simplified iteration}
   x_{k+1} = J_{\la A}\bigl(x_k - 2 \la B(x_k) + \la B(x_{k-1})\bigr) .
   \end{equation}
\begin{enumerate}[label=(\alph*)]
    \item If $B=0$ then \eqref{eq:simplified iteration} simplifies to
    the \emph{proximal point algorithm} \cite{R1976}, that is,
    \eqref{eq:simplified iteration} becomes
	$$ x_{k+1}=J_{\lambda A}(x_k)\quad\forall k\in\mathbb{N}. $$
	\item\label{it:mal B affine} If $A=N_C$ is the normal cone to a set $C$ and $B$ is an affine
        operator then \eqref{eq:simplified iteration} can be expressed
        as
        \begin{equation}\label{eq:maliskys method}
	  x_{k+1} = P_C\bigl(x_k - \la B(2x_k-x_{k-1})\bigr)\quad\forall k\in\mathbb{N},
	  \end{equation}
	which coincides with the \emph{projected reflected gradient method} \cite{M2015} for VIs.
	\item\label{it:mal A=0} If $A=N_{\Hilbert}=0$ then the
        projected reflected gradient method \eqref{eq:maliskys method}
        becomes
        $$ x_{k+1} = x_k - \la B(2x_k-x_{k-1})\quad\forall k\in\mathbb{N}.$$
        Under the change of variables $\overline{x}_k=2x_k-x_{k-1}$,
        this becomes
        $$ \overline{x}_{k+1} = \overline{x}_k - 2\la B(\overline{x}_k) + (x_{k-1}-x_k) =  \overline{x}_k - 2\la B(\overline{x}_k) + \la B(\overline{x}_{k-1})\quad\forall k\in\mathbb{N},$$
        which is precisely \eqref{eq:the iteration} with
        $A=0$. Alternatively, \eqref{eq:simplified iteration} can be
        expressed as the two step recursion
    \begin{equation}
        \label{eq:popov}
        \left\{\begin{aligned}
            y_{k+1} & = y_k - \la B(x_k)\\
            x_{k+1} & = y_{k+1} - \la B(x_k).
        \end{aligned}\right.
    \end{equation}
    This is exactly Popov's algorithm \cite{popov:80} for
    unconstrained VIs.  In this sense, the three methods coincide in
    this case up to a change of variable. Furthermore, in the GANs
    literature, both~\eqref{eq:maliskys method} and~\eqref{eq:popov}
    are also known to be equivalent to the \emph{optimistic gradient}
    method. For details, see the discussion in~\cite{hsieh2019convergence}.
\end{enumerate}
\end{remark}

Before establishing convergence of the method, we require some
preparatory results.

\begin{lemma}\label{lem:weakly convergent sequences}
    Let $(z_k)\subseteq\Hilbert$ be a bounded
    sequence and suppose $\lim_{k\to\infty}\|z_k-z\|$ exists whenever
    $z$ is a cluster point of $(z_k)$. Then
    $(z_k)$ is weakly convergent.
\end{lemma}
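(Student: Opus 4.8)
The plan is to read the cluster points in the hypothesis as \emph{weak sequential} cluster points and to reduce the lemma to the classical fact that a bounded sequence in a Hilbert space converges weakly as soon as it has exactly one weak cluster point. Since $(z_k)$ is bounded, weak sequential compactness of bounded sets in $\Hilbert$ guarantees that at least one weak cluster point exists; thus the substance of the argument lies entirely in proving \emph{uniqueness} of the weak cluster point.

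To establish uniqueness, I would take two weak cluster points $z$ and $z'$, realised along subsequences $z_{k_j}\wto z$ and $z_{k_i}\wto z'$. By hypothesis both $\lim_k\n{z_k-z}$ and $\lim_k\n{z_k-z'}$ exist. The computational heart of the proof is the elementary expansion
\[
\n{z_k-z}^2-\n{z_k-z'}^2 = 2\lr{z_k,\,z'-z}+\n{z}^2-\n{z'}^2,
\]
whose left-hand side converges as $k\to\infty$; hence $\lr{z_k,z'-z}$ converges to some limit $L$. Passing to the limit along the subsequence weakly converging to $z$ gives $L=\lr{z,z'-z}$, while passing to the limit along the subsequence weakly converging to $z'$ gives $L=\lr{z',z'-z}$. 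Equating the two expressions yields $\n{z'-z}^2=0$, and therefore $z=z'$.

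It remains to upgrade ``unique weak cluster point'' to weak convergence of the whole sequence, and this is the step I expect to demand the most care. I would argue by contradiction: if $(z_k)$ did not converge weakly to its unique cluster point $z$, then there would exist some $\phi\in\Hilbert$, some $\e>0$, and a subsequence along which $|\lr{z_k-z,\phi}|\geq\e$. This subsequence is bounded, so it admits a further weakly convergent sub-subsequence; its limit is again a weak cluster point and hence equals $z$, contradicting the separation $|\lr{z_k-z,\phi}|\geq\e$. The delicate point throughout is to invoke weak sequential compactness correctly and to avoid conflating weak cluster points with strong ones, since the hypothesis only controls $\n{z_k-z}$ for the former.
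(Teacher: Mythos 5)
Your proof is correct and is precisely the standard argument for this Opial-type result (cf.\ Bauschke--Combettes, Lemma~2.39), which the paper itself states without proof as a known fact: uniqueness of the weak cluster point via the expansion of $\|z_k-z\|^2-\|z_k-z'\|^2$ and passage to the limit along the two subsequences, followed by the bounded-sequence/contradiction argument that upgrades a unique weak sequential cluster point to weak convergence of the whole sequence. Your reading of ``cluster point'' as \emph{weak sequential} cluster point is also the intended one --- it is exactly how the lemma is invoked in the proof of Theorem~\ref{th:main}, and, as you note, the statement would in fact be false under the strong-cluster-point reading.
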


Equation~\eqref{abstr_seq} in the following proposition conforms, in
particular, to our proposed method given by
\begin{align*}
x_{k+1} & = J_{\la_k A}\bigl(x_k - \la_k B(x_k) -
\la_{k-1}(B(x_k)-B(x_{k-1})\bigr).
\end{align*}

\begin{proposition}\label{prop:abs}
    Let $F\colon \Hilbert\setto \Hilbert $ be maximally monotone, and let $d_1, v_{2}, u_1$, $v_1, u_{0}\in \Hilbert$ be arbitrary. Define $d_{2}$ as
    \begin{equation}
        \label{abstr_seq}
             d_{2} = J_{F}(d_1-u_1 - (v_1 - u_{0})).
     \end{equation}
     Then, for all $x\in\Hilbert$ and $u \in -F(x)$, we have
\begin{multline}\label{abs_ineq}
            \n{d_{2}-x}^2 + 2\lr{v_{2}-u_1, x-d_{2}} \leq
            \n{d_{1}-x}^2 + 2\lr{v_{1}-u_{0}, x-d_{1}}\\ +
            2\lr{v_1-u_{0},d_1-d_{2}} - \n{d_{1}-d_2}^2 -2\lr{v_{2}-u, d_{2}-x}.
        \end{multline}
    \end{proposition}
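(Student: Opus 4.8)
The plan is to turn the single defining relation of $d_2$ into an inclusion, feed it into the monotonicity of $F$ against the pair $(x,-u)$, and then convert the resulting scalar inequality into the claimed estimate by means of the polarization identity followed by routine regrouping of the linear terms. First I would unfold the resolvent: writing $a:=d_1-u_1-(v_1-u_0)$, the definition $d_2=J_F(a)=(I+F)^{-1}(a)$ is equivalent to $a-d_2\in F(d_2)$. Since $u\in -F(x)$ means $(x,-u)\in\gra F$, monotonicity of $F$ applied to the pairs $(d_2,a-d_2)$ and $(x,-u)$ yields
\[
  \lr{d_2-x,\,(a-d_2)+u}\ge 0,
\]
which, after substituting $a$ and expanding, becomes
\[
  \lr{d_2-x,\,d_1-d_2}-\lr{d_2-x,u_1}-\lr{d_2-x,v_1-u_0}+\lr{d_2-x,u}\ge 0 .
\]

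Next I would linearise the quadratic inner product via the three-point identity $2\lr{d_2-x,\,d_1-d_2}=\n{d_1-x}^2-\n{d_2-x}^2-\n{d_1-d_2}^2$. Multiplying the monotonicity inequality by $2$, substituting this identity, and isolating $\n{d_2-x}^2$, I obtain
\[
  \n{d_2-x}^2\le \n{d_1-x}^2-\n{d_1-d_2}^2+2\lr{x-d_2,u_1}+2\lr{x-d_2,v_1-u_0}-2\lr{x-d_2,u}.
\]

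Finally I would reconcile this with \eqref{abs_ineq} by adding $2\lr{v_2-u_1,\,x-d_2}$ to both sides. On the left this reproduces the stated left-hand side; on the right the two occurrences of $u_1$ cancel, leaving $2\lr{v_2,x-d_2}$, which merges with $-2\lr{x-d_2,u}$ into $-2\lr{v_2-u,\,d_2-x}$. The remaining term is handled by the telescoping $x-d_2=(x-d_1)+(d_1-d_2)$, which turns $2\lr{x-d_2,\,v_1-u_0}$ into exactly $2\lr{v_1-u_0,\,x-d_1}+2\lr{v_1-u_0,\,d_1-d_2}$; these are precisely the terms appearing on the right-hand side of \eqref{abs_ineq}.

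I do not expect any genuine obstacle here: the entire argument rests on one application of monotonicity plus the polarization identity. The only points requiring care are bookkeeping of signs when passing between $\lr{d_2-x,\cdot}$ and $\lr{x-d_2,\cdot}$, the cancellation of the $u_1$ contributions, and recognising the telescoping split of $x-d_2$ that generates the two separate $v_1-u_0$ terms in the statement.
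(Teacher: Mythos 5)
Your proposal is correct and follows essentially the same route as the paper's proof: unfold the resolvent into the inclusion $d_1-u_1-(v_1-u_0)-d_2\in F(d_2)$, apply monotonicity against the pair $(x,-u)$, use the three-point identity on $\lr{d_2-x,d_1-d_2}$, and regroup the linear terms (your step of adding $2\lr{v_2-u_1,x-d_2}$ to both sides is just a cosmetic reorganization of the paper's splitting $\lr{u_1-u,x-d_2}=\lr{v_2-u,x-d_2}+\lr{u_1-v_2,x-d_2}$, and the telescoping of $x-d_2$ through $d_1$ is identical).
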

\begin{proof}
    By definition of the resolvent,
    $d_1 - u_1 - (v_1-u_{0}) \in d_{2} + F(d_{2})$ and hence, by
    monotonicity of $F$,
    \begin{align*}
           0 &\leq \lr{d_{2}-d_1 + u_1 +  (v_1-u_{0})-u,
               x-d_{2}} \\
      &=\lr{d_{2}-d_1, x-d_{2}} + \lr{u_1-u, x-d_{2}}+\lr{v_1-u_{0},x-d_{2}}.
    \end{align*}
    The first term can expressed as
    \[\lr{d_{2}-d_1, x-d_{2}} = \frac{1}{2}\left( \n{d_1-x}^2 -
            \n{d_{2}-x}^2-\n{d_{2}-d_1}^2\right), \]
and the second and third terms can be rewritten, respectively, as
    \begin{align*}
          \lr{u_1-u, x-d_{2}} & =\lr{v_{2}-u, x-d_{2}} +
          \lr{u_1-v_{2}, x-d_{2}}, \\
          \lr{v_1-u_{0},x-d_{2}}  & =\lr{v_1-u_{0},x-d_1} + \lr{v_1-u_{0},d_1-d_{2}}.
    \end{align*}
The claimed inequality follows by combining these expressions.
\end{proof}
Apart from using the monotonicity of $F$, the proof of Proposition~\ref{prop:abs} only uses simple algebraic manipulations involving $u_1, v_1, u_{0}, v_{2}, d_1, d_{2}$. Nevertheless, the resulting inequality \eqref{abs_ineq} already provides some insight into how our subsequence analysis of \eqref{eq:the iteration intro} proceeds. For instance, the first line of~\eqref{abs_ineq} suggests terms for telescoping so long as the second line can be appropriately estimated.

\begin{lemma}\label{lem:decreasing sequence}
 	Let $x\in(A+B)^{-1}(0)$ and let $(x_k)$ be given by \eqref{eq:the iteration}.
 	Suppose $(\la_k)\subseteq\left[\e,\frac{1-2\e}{2L}\right]$ for some $\e>0$.
 	Then, for all $k\in\mathbb{N}$, we have
 	\begin{multline}
 	\|x_{k+1}-x\|^2+2\la_k\lr{B(x_{k+1})-B(x_k),x-x_{k+1}} + \left(\frac{1}{2}+\e\right)\|x_{k+1}-x_k\|^2 \\
 	\leq \|x_k-x\|^2  + 2\la_{k-1}\lr{B(x_k)-B(x_{k-1}), x-x_{k}}  +\frac{1}{2}\|x_{k}-x_{k-1}\|^2.
 	\end{multline}
    \end{lemma}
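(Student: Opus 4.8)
The plan is to apply Proposition~\ref{prop:abs} with $F = \la_k A$ and the identifications $d_1 = x_k$, $d_2 = x_{k+1}$, together with $u_1 = \la_k B(x_k)$, $v_2 = \la_k B(x_{k+1})$, $v_1 = \la_{k-1}B(x_k)$ and $u_0 = \la_{k-1}B(x_{k-1})$. With these choices the defining relation~\eqref{abstr_seq} reproduces the iteration~\eqref{eq:the iteration}, since $d_1 - u_1 - (v_1 - u_0) = x_k - \la_k B(x_k) - \la_{k-1}(B(x_k)-B(x_{k-1}))$, and the quantities $\n{d_2-x}^2$, $2\lr{v_2-u_1,x-d_2}$, $\n{d_1-x}^2$, $2\lr{v_1-u_0,x-d_1}$ match the first two terms on each side of the claim. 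Since $x\in(A+B)^{-1}(0)$ gives $-B(x)\in A(x)$, the point $u := \la_k B(x)$ satisfies $u \in -\la_k A(x) = -F(x)$, so the hypotheses of the proposition are met.

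Applying~\eqref{abs_ineq} and noting $v_2 - u = \la_k(B(x_{k+1}) - B(x))$, the final term becomes $-2\la_k\lr{B(x_{k+1})-B(x), x_{k+1}-x}$, which is nonpositive by monotonicity of $B$ and may be discarded. After this the inequality reads
$$\n{x_{k+1}-x}^2 + 2\la_k\lr{B(x_{k+1})-B(x_k), x-x_{k+1}} \leq \n{x_k-x}^2 + 2\la_{k-1}\lr{B(x_k)-B(x_{k-1}), x-x_k} + 2\la_{k-1}\lr{B(x_k)-B(x_{k-1}), x_k - x_{k+1}} - \n{x_k-x_{k+1}}^2.$$

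It then remains to absorb the cross term. I would bound it via Cauchy--Schwarz, the $L$-Lipschitz continuity of $B$ and the (unweighted) Young inequality,
$$2\la_{k-1}\lr{B(x_k)-B(x_{k-1}), x_k-x_{k+1}} \leq \la_{k-1}L\n{x_k-x_{k-1}}^2 + \la_{k-1}L\n{x_{k+1}-x_k}^2.$$
Substituting and collecting the squared-distance terms, the residual inequality to verify is
$$\left(\e - \tfrac{1}{2} + \la_{k-1}L\right)\n{x_{k+1}-x_k}^2 \leq \left(\tfrac{1}{2} - \la_{k-1}L\right)\n{x_k-x_{k-1}}^2.$$
Here the stepsize restriction $\la_{k-1}\leq \frac{1-2\e}{2L}$ is exactly what closes the argument: it forces the left coefficient $\e - \tfrac12 + \la_{k-1}L \leq 0$, while $\la_{k-1} < \frac{1}{2L}$ keeps the right coefficient nonnegative, so the left side is $\leq 0 \leq$ the right side, yielding the claim.

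The main obstacle I anticipate is not any single estimate but the bookkeeping: correctly matching the six abstract vectors of Proposition~\ref{prop:abs} to the scaled operator values so every term lands in the right place, and then choosing the split in Young's inequality so that $L$ and the constant $\e$ combine to reproduce precisely the factor $\frac{1-2\e}{2L}$ appearing in the stepsize bound.
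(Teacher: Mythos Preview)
Your proposal is correct and follows essentially the same route as the paper's proof: the same identifications in Proposition~\ref{prop:abs}, discarding the monotone term, the same Cauchy--Schwarz/Young estimate on the cross term, and then the observation that $\la_{k-1}L\le \frac{1-2\e}{2}$ forces the residual coefficients to have the right signs. The only cosmetic difference is that the paper writes the final step as $1-\la_{k-1}L\ge \tfrac12+\e$ and $\la_{k-1}L<\tfrac12$ rather than isolating a ``residual inequality,'' but the content is identical.
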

    \begin{proof}
By applying Proposition~\ref{prop:abs} with
\begin{equation*}
  \begin{split}
    F&:=\lambda_kA\quad\\
u&:=\la_kB(x) \quad
  \end{split}
\quad
  \begin{split}
       d_1&:=x_k \\
      d_{2}&:=x_{k+1}
\end{split}
\quad
\begin{split}
       u_{0}& :=\la_{k-1}B(x_{k-1}) \quad \\u_1&:=\la_k B(x_k) \quad
\end{split}
\quad
\begin{split}
v_1 &:=\la_{k-1}B(x_k) \\ v_{2} &:= \la_k B(x_{k+1}) ,
\end{split}
\end{equation*}
we obtain the inequality
\begin{multline*}
 \n{x_{k+1}-x}^2 + 2\la_k\lr{B(x_{k+1})-B(x_k),x-x_{k+1}} + \n{x_{k+1}-x_k}^2\\
   \leq \n{x_k-x}^2 + 2\la_{k-1}\lr{B(x_k)-B(x_{k-1}),x-x_{k}} \\
      + 2\la_{k-1}\lr{B(x_k)-B(x_{k-1}),x_k-x_{k+1}} - 2\la_k\lr{B(x_{k+1})-B(x),x_{k+1}-x}.
 \end{multline*}
 Since $B$ is monotone, the last term is nonnegative. Using Lipschitzness of $B$, the second last term can be estimated as
 \begin{equation}\label{eq:lipschitz B for lem:decreasing sequence}
   \begin{split}
       \lr{B(x_k)-B(x_{k-1}),x_k-x_{k+1}}
       & \leq  L\n{x_k-x_{k-1}}\n{x_k-x_{k+1}} \\
       & \leq \frac{L}{2}\left(\n{x_k-x_{k-1}}^2+\n{x_k-x_{k+1}}^2\right).
     \end{split}
 \end{equation}
Thus, altogether, we obtain
 \begin{multline*}
     \n{x_{k+1}-x}^2 + 2\la_k\lr{B(x_{k+1})-B(x_k),x-x_{k+1}} + (1-\la_{k-1} L)\n{x_{k+1}-x_k}^2\\ \leq \n{x_k-x}^2 + 2\la_{k-1}\lr{B(x_k)-B(x_{k-1}),x-x_{k}} + \la_{k-1} L\n{x_k-x_{k-1}}^2.
 \end{multline*}
The claimed inequality follows since $\la_{k-1}L<\frac{1}{2}$ and $1-\la_{k-1}L\geq 1-\frac{1-2\e}{2} = \frac{1}{2}+\e$.
 \end{proof}

We are now ready for the first main result regarding convergence of the proposed method.
\begin{theorem}\label{th:main}
	Let $A\colon \Hilbert\setto\Hilbert$ be maximally monotone, let $B\colon \Hilbert\to\Hilbert$ be monotone and $L$-Lipschitz, and suppose that $(A+B)^{-1}(0)\neq\varnothing$.
	Suppose $(\la_k)\subseteq\left[\e,\frac{1-2\e}{2L}\right]$ for some $\e>0$.
	Given $x_0,x_{-1}\in\Hilbert$, define the sequence $(x_k)$ according to
	   \begin{equation*}
	   x_{k+1} = J_{\la_k A}\bigl(x_k - \la_k B(x_k) - \lambda_{k-1}(B(x_k)-B(x_{k-1})) \bigr) \quad\forall k\in\mathbb{N}.
	   \end{equation*}
	Then $(x_k)$ converges weakly to a point contained in $(A+B)^{-1}(0)$.
\end{theorem}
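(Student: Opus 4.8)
The plan is to read off a quasi-Fej\'er Lyapunov quantity directly from Lemma~\ref{lem:decreasing sequence} and then feed it into Lemma~\ref{lem:weakly convergent sequences} (an Opial-type statement). Fix $x\in(A+B)^{-1}(0)$ and set
\[
\phi_k := \n{x_k-x}^2 + 2\la_{k-1}\lr{B(x_k)-B(x_{k-1}),x-x_k} + \tfrac12\n{x_k-x_{k-1}}^2 .
\]
The three terms on the left-hand side of Lemma~\ref{lem:decreasing sequence} are precisely $\phi_{k+1}$ together with a surplus $\e\n{x_{k+1}-x_k}^2$, while its right-hand side is exactly $\phi_k$. Hence the lemma reads $\phi_{k+1}+\e\n{x_{k+1}-x_k}^2\le\phi_k$ for all $k\in\N$, so that $(\phi_k)$ is nonincreasing.

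The crux is to show that $\phi_k$ is nonnegative and coercive in $\n{x_k-x}$, and this is where the stepsize restriction $2\la L<1$ is essential. Cauchy--Schwarz and $L$-Lipschitzness bound the cross term below by $-2\la_{k-1}L\n{x_k-x_{k-1}}\n{x-x_k}$; writing $a=\n{x_k-x}$, $b=\n{x_k-x_{k-1}}$ and $c=2\la_{k-1}L\le 1-2\e$, one is left with the quadratic form $a^2-cab+\tfrac12 b^2$. Since $c<1<\sqrt2$ this form is positive definite, and quantitatively $a^2-cab+\tfrac12 b^2\ge\delta a^2$ with $\delta:=1-\tfrac{(1-2\e)^2}{2}>0$, so that $\phi_k\ge\delta\n{x_k-x}^2\ge0$. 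Consequently $(\phi_k)$ converges, the bound $\n{x_k-x}^2\le\phi_k/\delta\le\phi_0/\delta$ shows $(x_k)$ is bounded, and summing the recursion yields $\sum_k\n{x_{k+1}-x_k}^2<\infty$, whence $\n{x_{k+1}-x_k}\to0$.

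Next I would check that $\lim_k\n{x_k-x}$ exists for every solution $x$. Since $\n{x_k-x_{k-1}}\to0$ and $B$ is Lipschitz, both the cross term and $\tfrac12\n{x_k-x_{k-1}}^2$ in $\phi_k$ tend to $0$ (using boundedness of $(x_k)$ and of $(\la_k)$), so $\n{x_k-x}^2=\phi_k+o(1)$ converges. It then remains to identify the weak cluster points. Rearranging the inclusion \eqref{eq:inclusion iteration} and dividing by $\la_k$ gives $r_{k+1}\in(A+B)(x_{k+1})$, where
\[
r_{k+1}:=\frac{x_k-x_{k+1}}{\la_k}-\frac{\la_{k-1}}{\la_k}\bigl(B(x_k)-B(x_{k-1})\bigr)+B(x_{k+1})-B(x_k).
\]
Because $\la_k\ge\e$ (so the ratios $\la_{k-1}/\la_k$ are bounded), $\n{x_{k+1}-x_k}\to0$ and $B$ is Lipschitz, we get $r_{k+1}\to0$ strongly. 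If $x_{k_j}\wto x^*$ then also $x_{k_j+1}\wto x^*$; since $A+B$ is maximally monotone ($A$ maximal, $B$ monotone and continuous with full domain) its graph is closed in the weak--strong topology, and passing to the limit in $(x_{k_j+1},r_{k_j+1})\in\gra(A+B)$ yields $0\in(A+B)(x^*)$.

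Finally, since every weak cluster point of the bounded sequence $(x_k)$ lies in $(A+B)^{-1}(0)$ and $\lim_k\n{x_k-x}$ exists for every such point, Lemma~\ref{lem:weakly convergent sequences} applies and delivers weak convergence of $(x_k)$ to a point of $(A+B)^{-1}(0)$. I expect the main obstacle to be the quantitative second step: the iteration is not Fej\'er monotone, so one must identify the correct augmented energy $\phi_k$ (the one telescoping through Lemma~\ref{lem:decreasing sequence}) and then verify its positivity and coercivity, which is exactly where the threshold $\la<1/(2L)$ enters. By contrast, the closedness argument for the cluster points is comparatively routine once $\n{x_{k+1}-x_k}\to0$ is established.
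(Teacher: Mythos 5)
Your proposal is correct and follows essentially the same route as the paper's proof: the same Lyapunov quantity $\phi_k$ read off from Lemma~\ref{lem:decreasing sequence}, the same telescoping to get boundedness and $\n{x_{k+1}-x_k}\to 0$, the same demiclosedness argument identifying weak cluster points as zeros of $A+B$, and the same appeal to Lemma~\ref{lem:weakly convergent sequences}. The only cosmetic difference is that you certify positivity and coercivity of $\phi_k$ via the discriminant of the quadratic form $a^2-cab+\tfrac12 b^2$ (yielding the constant $\delta=1-\tfrac{(1-2\e)^2}{2}$), whereas the paper uses Young's inequality to absorb the cross term and obtain the factor $\tfrac12$; both are valid and quantitatively equivalent in role.
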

\begin{proof}
Let $x\in(A+B)^{-1}(0)$. By Lemma~\ref{lem:decreasing sequence}, we have
\begin{multline}\label{eq:key inequality}
	\|x_{k+1}-x\|^2+2\la_k\lr{B(x_{k+1})-B(x_k),x-x_{k+1}} + \left(\frac{1}{2}+\e\right)\|x_{k+1}-x_k\|^2 \\
	\leq \|x_k-x\|^2  + 2\la_{k-1}\lr{B(x_k)-B(x_{k-1}), x-x_{k}}  +\frac{1}{2}\|x_{k}-x_{k-1}\|^2,
	\end{multline}
    which telescopes to yield
    \begin{multline}\label{eq:thm2}
	\|x_{k+1}-x\|^2+2\la_k\lr{B(x_{k+1})-B(x_k),x-x_{k+1}}
	+ \frac{1}{2}\|x_{k+1}-x_k\|^2\\+\e\sum_{i=0}^k\|x_{i+1}-x_i\|^2
	\leq \|x_0-x\|^2 + 2\la_{-1}\lr{B(x_0)-B(x_{-1}), x-x_{0}}+ \frac{1}{2}\n{x_0-x_{-1}}^2.
    \end{multline}
    Using  Lipschitzness of $B$, we can estimate
	\begin{align}\label{eq:lipschitzness of B for th:main}
	\begin{split}
	2\la_k\lr{B(x_{k+1})-B(x_k),x-x_{k+1}}
	&\geq - 2\la_k L\n{x_{k+1}-x_k}\n{x-x_{k+1}} \\
	& \geq -\la_k L \left(\n{x_{k+1}-x_k}^2+\n{x-x_{k+1}}^2\right).
	\end{split}
	\end{align}
	Since $\la_k L \leq (1-2\e)/2<1/2$, substituting the previous equation back into \eqref{eq:thm2} gives
	\begin{multline*}
	\frac{1}{2}\|x_{k+1}-x\|^2+\e\sum_{i=0}^k\|x_{i+1}-x_i\|^2 \\
	\leq \|x_0-x\|^2+2\la_{-1}\lr{B(x_0)-B(x_{-1}), x-x_{0}}+ \frac{1}{2}\n{x_0-x_{-1}}^2,
	\end{multline*}
	from which we deduce that $(x_k)$ is bounded and that $\|x_k-x_{k+1}\|\to 0$.

    Let $\overline{x}$ be a sequential weak cluster point of the bounded sequence $(x_k)$. From \eqref{eq:inclusion iteration},
	\begin{multline}\label{eq:inclusion in proof}
	\frac{1}{\la_{k-1}}\big( x_{k-1}-x_{k} + \la_{k-1}\left( B(x_{k})-B(x_{k-1})\right)\\
	 + \la_{k-2}\left( B(x_{k-2})-B(x_{k-1})\right) \big) \in (A+B)(x_{k})\qquad \forall k\geq 1.
	\end{multline}
	Since $A+B$ is maximally monotone \cite[Corollaries~24.4(i) \& 20.25]{BC2010}, its graph is demiclosed (\emph{i.e.,} sequentially closed in the weak-strong topology on $\Hilbert\times\Hilbert$) \cite[Proposition~20.33]{BC2010}. Thus, by taking the limit along a subsequence of $(x_k)$ which converges to $\overline{x}$ in \eqref{eq:inclusion in proof} and noting that $\lambda_k\geq \e$ for all $k\in\mathbb{N}$, we deduce that $0\in(A+B)(\overline{x})$. To show that $(x_k)$ is weakly convergent, first note that, by combining \eqref{eq:key inequality} and \eqref{eq:lipschitzness of B for th:main}, we deduce existence of the limit
	 \begin{equation}
	 \label{eq:limit}
	 \lim_{k\to\infty}\left(\|x_k-\overline{x}\|^2  + 2\la_{k-1}\lr{B(x_k)-B(x_{k-1}), \overline{x}-x_{k}}  +\frac{1}{2}\|x_{k}-x_{k-1}\|^2\right).
	 \end{equation}
    Since $(x_k)$ and $(\la_k)$ are bounded, $\n{x_k-x_{k+1}}\to 0$, and $B$ is continuous, it then follows that the limit \eqref{eq:limit} is equal to $\lim_{k\to\infty}\|x_{k}-\overline{x}\|^2$.
	Since the cluster point $\overline{x}$ of $(x_k)$ was chosen arbitrarily, the sequence  $(x_k)$ is weakly convergent by Lemma~\ref{lem:weakly convergent sequences} and the proof is complete.
\end{proof}

As an immediate consequence of Theorem~\ref{th:main}, we obtain the
following corollary when the stepsize sequence
$(\lambda_k)$ is constant.
\begin{corollary}
	Let $A\colon \Hilbert\setto\Hilbert$ be maximally monotone, let $B\colon \Hilbert\to\Hilbert$ be monotone and $L$-Lipschitz, and suppose that $(A+B)^{-1}(0)\neq\varnothing$. Choose $\lambda\in \left(0,\frac{1}{2L}\right)$.
	Given $x_0,x_{-1}\in\Hilbert$, define the sequence $(x_k)$ according to
	\begin{equation*}
 	x_{k+1} = J_{\la A}\bigl(x_k - 2\la B(x_k) +\la B(x_{k-1})\bigr) \quad\forall k\in\mathbb{N}.
	\end{equation*}
	Then $(x_k)$ converges weakly to a point contained in $(A+B)^{-1}(0)$.
\end{corollary}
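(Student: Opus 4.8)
The plan is to deduce the corollary directly from Theorem~\ref{th:main} by exhibiting a constant step-size sequence that satisfies that theorem's hypotheses. First I would observe that setting $\la_k=\la$ for all $k\geq -1$ in the iteration of Theorem~\ref{th:main} produces
\begin{equation*}
x_{k+1} = J_{\la A}\bigl(x_k - \la B(x_k) - \la(B(x_k)-B(x_{k-1}))\bigr) = J_{\la A}\bigl(x_k - 2\la B(x_k) + \la B(x_{k-1})\bigr),
\end{equation*}
which is exactly the recursion in the statement. Hence the two sequences coincide, and it suffices to check that the constant sequence $(\la_k)$ lies in an admissible interval $\left[\e,\frac{1-2\e}{2L}\right]$ for some $\e>0$.

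Second, I would produce such an $\e$. The condition $\la\in\left[\e,\frac{1-2\e}{2L}\right]$ is equivalent to the pair of inequalities $\e\leq\la$ and $2\la L\leq 1-2\e$, that is, $\e\leq\frac{1-2\la L}{2}$. Since $\la<\frac{1}{2L}$ we have $2\la L<1$, so the quantity $\frac{1-2\la L}{2}$ is strictly positive. Taking, for instance, $\e:=\min\left\{\la,\frac{1-2\la L}{2}\right\}>0$ then guarantees $\la_k=\la\in\left[\e,\frac{1-2\e}{2L}\right]$ for every $k$.

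Finally, with this $\e$ and the constant step-size sequence in hand, all hypotheses of Theorem~\ref{th:main} are met: $A$ is maximally monotone, $B$ is monotone and $L$-Lipschitz, $(A+B)^{-1}(0)\neq\varnothing$, and $(\la_k)\subseteq\left[\e,\frac{1-2\e}{2L}\right]$. Invoking Theorem~\ref{th:main} then yields weak convergence of $(x_k)$ to a point in $(A+B)^{-1}(0)$, as claimed. Since the argument is a direct specialization, I anticipate no genuine obstacle; the only point requiring care is confirming that the strict inequality $\la<\frac{1}{2L}$ leaves positive room for $\e$, which is precisely what the computation above verifies.
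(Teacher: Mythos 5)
Your proposal is correct and is exactly the argument the paper intends: the paper presents this corollary as an immediate consequence of Theorem~\ref{th:main} with a constant step-size sequence, and your choice $\e:=\min\bigl\{\la,\tfrac{1-2\la L}{2}\bigr\}>0$ simply makes explicit the verification that $\la\in\left[\e,\tfrac{1-2\e}{2L}\right]$, which the paper leaves to the reader.
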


\begin{remark}\label{rem:metric}
In practice, it can be desirable to analyze an algorithm with respect to an auxiliary metric to encourage faster convergence. This is done by considering the metric induced by the inner product $\lr{\cdot,\cdot}_M$ corresponding to a symmetric positive definite operator $M\colon \Hilbert \to \Hilbert$. In the case of saddle point problems, for example, choosing the operator $M$ to be a diagonal scaling matrix gives different weights to primal and dual variables. To keep our presentation as simple and as clear as possible, we present our analysis only for the case when $M = I$. Nevertheless, the more general case can be easily obtained through a straightforward modification of the proof. In particular, instead of \eqref{eq:the iteration}, we can consider the iteration
\begin{equation*} 
		x_{k+1} = J_{\la_k A}^M \left( x_k - M^{-1} \left[\la_k B(x_k) +
		\la_{k-1} (B(x_k) - B(x_{k-1}))\right] \right) \quad\forall k\in\mathbb{N},
\end{equation*}
where $J_A^M = (I + M^{-1}A)^{-1}$ denotes the \emph{generalized resolvent} of $A$.
\end{remark}

\begin{remark}
    Since the main focus of this work lies in the development and
    analysis of new methods, we delay a more thorough computation
    comparison for future investigation. Nevertheless, the following
    example provides a specific problem for which the
    forward-reflected-backward method is faster than Tseng's method. We make
    no claims about the performance of the proposed method in general.

Consider \eqref{eq:zero 2sum} with
$\Hilbert=\mathbb{R}^n\times\mathbb{R}^n$, $A(z_1,z_2)=(0,0)$ and
$B(z_1,z_2)=(z_2,-z_1)$. Note that zero is the unique solution to this problem
and that the operator $B$ is $1$-Lipschitz. Let us also denote the
identity operators on $\Hilbert$ and $\mathbb{R}^n$ by $I_{\Hilbert}$
and $I_n$, respectively. This is a classical example of a monotone
inclusion, where the forward-backward method fails.

\paragraph{Tseng's method} By eliminating $y_k$ from \eqref{eq:tseng}
and using the identity $B^2=-I_{\Hilbert}$, Tseng's method can be expressed as
  $$ x_{k+1} = T(x_k)=T^{k+1}(x_0)\text{~where~} T:= (1-\lambda^2)I_\Hilbert-\lambda B. $$
Since $\lr{x_k,B(x_k)}=0$ and $\n{x_k}=\n{B(x_k)}$, we have
  $$ \|x_{k+1}\|^2 = \|T(x_k)\|^2 = \bigl( (1-\la^2)^2 +\la^2\bigr) \|x_k\|^2. $$
Let $\la\in(0,1)$. The sequence $(x_k)$ therefore converges $Q$-linearly to zero with rate
  $$ \rho:=  \sqrt{(1-\la^2)^2+\la^2}=\sqrt{1-\la^2+\la^4}<1. $$
In fact, this shows the optimal stepsize is $\la=1/\sqrt{2}$ which
gives a rate of $\sqrt{3}/2$. (Note that the optimal rate does not occur
for the largest possible stepsize).

\paragraph{Forward-reflected-backward splitting} The forward-reflected-backward  method with constant stepsize $\la\in(0,1/2)$ can be expressed as
\begin{equation*}
\binom{x_{k+1}}{x_k} = T\binom{x_{k}}{x_{k-1}}=T^{k+1}\binom{x_{0}}{x_{-1}}\text{~~where~~}T:=\begin{bmatrix}I_\Hilbert-2\la B & \la B \\ I_\Hilbert & 0 \\ \end{bmatrix}
\end{equation*}
The eigenvalues of $T$ are given by
$ \frac{1}{2}\pm\frac{1}{2}i\,\sqrt {8\,{\la}^{2}-1-4i\,\la\sqrt {1-4\,{\la}^{2}}}$.
By choosing the stepsize $\la\approx1/2$, we deduce that $(x_k)$ converges $R$-linearly with a rate that be made arbitrarily close to $\left|\frac{1}{2}\pm \frac{1}{2}i\right|=\frac{1}{\sqrt{2}}. $

Since $1/\sqrt{2}<\sqrt{3}/{2}$, we conclude that the forward-reflected-backward method is faster than
Tseng's method for this particular problem. Note that this comparison is in terms of the number of iterations.
\end{remark}

\subsection{Linear Convergence}\label{s:linear convergence}
In this section, we establish $R$-linear convergence of the sequence generated by the forward-reflected-backward method when $A$ is \emph{strongly monotone}. Recall that $A\colon \Hilbert\setto\Hilbert$ is \emph{$m$-strongly monotone} if $m>0$ and
  $$ \lr{x-y,u-v} \geq m\n{x-y}^2\quad\forall (x,u),(y,v)\in\gra A. $$
  Strong monotonicity is a standard assumption for proving linear
  convergence of first order methods. We also note that there is no
  loss of generality in assuming that $A$ is strongly monotone. For if
  $B$ is $m$-strongly monotone, we can always augment the operators by
  the identity, \emph{i.e.,} $A+B=(A+mI)+(B-mI)$, without destroying
  monotonicity and Lipschitz continuity. Notice this  does not
  complicate computing the resolvent of $(A + mI)$, as
  we have $J_{A + mI}(x) = J_{\frac{A}{1+m}}\bigl(\frac{x}{1+m}\bigr)$ for all
  $x\in \Hilbert$.

\begin{theorem}
	Let $A\colon \Hilbert\setto\Hilbert$ be maximally monotone and $m$-strongly monotone, let $B\colon \Hilbert\to\Hilbert$ be monotone and $L$-Lipschitz, and suppose $(A+B)^{-1}(0)\neq\varnothing$. Let $\la\in\left(0,\frac{1}{2L}\right)$. Given $x_0,x_{-1}\in\Hilbert$, define the sequence $(x_k)$ according to
	\begin{equation*}
	x_{k+1} = J_{\la A}\bigl( x_k - 2\la B(x_k) +\la B(x_{k-1}) \bigr) \quad\forall k\in\mathbb{N}.
	\end{equation*}
	Then $(x_k)$ converges $R$-linearly to the unique element of $(A+B)^{-1}(0)$.
\end{theorem}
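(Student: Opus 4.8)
The plan is to build directly on the machinery already developed for Theorem~\ref{th:main}, upgrading the telescoping inequality of Lemma~\ref{lem:decreasing sequence} into a genuine contraction by exploiting the extra term supplied by strong monotonicity of $A$. First I would revisit the monotonicity step in the proof of Proposition~\ref{prop:abs}: where that proof used only $0\leq\lr{d_2-d_1+u_1+(v_1-u_0)-u,\,x-d_2}$, the $m$-strong monotonicity of $A$ (hence of $\la_k A=F$, which is $\la_k m$-strongly monotone) yields the sharper bound $\la_k m\n{x_{k+1}-x}^2\leq\lr{\cdots,\,x-x_{k+1}}$. Tracing this through the derivation of Lemma~\ref{lem:decreasing sequence} with the constant stepsize $\la$ produces a strengthened one-step estimate of the form
\begin{equation*}
(1+2\la m)\n{x_{k+1}-x}^2 + 2\la\lr{B(x_{k+1})-B(x_k),x-x_{k+1}} + \Bigl(\tfrac12+\e\Bigr)\n{x_{k+1}-x_k}^2
\leq \n{x_k-x}^2 + 2\la\lr{B(x_k)-B(x_{k-1}),x-x_k} + \tfrac12\n{x_k-x_{k-1}}^2,
\end{equation*}
where $x$ is now the unique zero of $A+B$ (uniqueness being immediate from strong monotonicity of $A+B$).

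Next I would package the left-hand quantities into a single Lyapunov functional. Define
\begin{equation*}
\Phi_k := \n{x_k-x}^2 + 2\la\lr{B(x_k)-B(x_{k-1}),x-x_k} + \tfrac12\n{x_k-x_{k-1}}^2,
\end{equation*}
so that the strengthened estimate reads (schematically) $\Phi_{k+1} + 2\la m\n{x_{k+1}-x}^2 + \e\n{x_{k+1}-x_k}^2 \leq \Phi_k$. The goal is to find a constant $q\in(0,1)$ with $\Phi_{k+1}\leq q\,\Phi_k$. The main obstacle, which I flag now, is that $\Phi_k$ is not manifestly nonnegative: the cross term $2\la\lr{B(x_k)-B(x_{k-1}),x-x_k}$ has indefinite sign, so $\Phi_k$ is not a norm-equivalent quantity on its face. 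To handle this I would use Lipschitzness exactly as in \eqref{eq:lipschitzness of B for th:main}, bounding $2\la\lr{B(x_k)-B(x_{k-1}),x-x_k}\geq -\la L(\n{x_k-x_{k-1}}^2+\n{x_k-x}^2)$, and since $\la L<\tfrac12$ this shows $\Phi_k\geq (1-\la L)\n{x_k-x}^2+(\tfrac12-\la L)\n{x_k-x_{k-1}}^2\geq c\,(\n{x_k-x}^2+\n{x_k-x_{k-1}}^2)$ for some $c>0$; a matching upper bound $\Phi_k\leq C\,(\n{x_k-x}^2+\n{x_k-x_{k-1}}^2)$ follows identically. Thus $\Phi_k$ is equivalent to $\n{x_k-x}^2+\n{x_k-x_{k-1}}^2$.

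The linear-rate argument then proceeds by showing that the ``dissipation'' $2\la m\n{x_{k+1}-x}^2+\e\n{x_{k+1}-x_k}^2$ dominates a fixed fraction of $\Phi_{k+1}$. Since $\Phi_{k+1}\leq C(\n{x_{k+1}-x}^2+\n{x_{k+1}-x_k}^2)$ while the dissipation controls both $\n{x_{k+1}-x}^2$ and $\n{x_{k+1}-x_k}^2$ with positive coefficients $2\la m$ and $\e$, there exists $\delta>0$ with $\delta\,\Phi_{k+1}\leq 2\la m\n{x_{k+1}-x}^2+\e\n{x_{k+1}-x_k}^2$; combining with the strengthened estimate gives $(1+\delta)\Phi_{k+1}\leq\Phi_k$, i.e. $\Phi_{k+1}\leq q\Phi_k$ with $q=\tfrac{1}{1+\delta}<1$. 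Iterating yields $\Phi_k\leq q^k\Phi_0$, and feeding this through the lower equivalence $c\n{x_k-x}^2\leq\Phi_k$ gives $\n{x_k-x}\leq\sqrt{\Phi_0/c}\;(\sqrt q)^k$, which is precisely $R$-linear convergence of $(x_k)$ to the unique zero. I expect the two-sided norm-equivalence of $\Phi_k$ to be the only genuinely delicate point; everything else is bookkeeping that parallels the proof of Theorem~\ref{th:main}.
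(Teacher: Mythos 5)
Your proof is correct and takes essentially the same route as the paper: both strengthen Lemma~\ref{lem:decreasing sequence} with the extra term $2\la m\|x_{k+1}-x\|^2$ supplied by strong monotonicity, and both convert the resulting one-step decrease of the same Lyapunov quantity $\Phi_k$ into a geometric contraction, using Lipschitzness of $B$ to control the indefinite cross term. The only difference is bookkeeping in the contraction step: the paper splits $\Phi_k=a_k+b_k$ with $a_k=\frac12\|x_k-x\|^2$ and redistributes coefficients to reach $\alpha(a_{k+1}+b_{k+1})\le a_k+b_k$ for some $\alpha>1$, whereas you establish the two-sided equivalence of $\Phi_k$ with $\|x_k-x\|^2+\|x_k-x_{k-1}\|^2$ and absorb a fixed fraction $\delta\,\Phi_{k+1}$ into the dissipation term $2\la m\|x_{k+1}-x\|^2+\e\|x_{k+1}-x_k\|^2$, an equivalent (and arguably cleaner) device.
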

\begin{proof}
	Let $x\in(A+B)^{-1}(0)$. Using strong monotonicity of $A$ (in
        place of monotonicity) in Proposition~\ref{prop:abs} and
        propagating the resulting inequality through the proof of Lemma~\ref{lem:decreasing sequence} gives the inequality
	\begin{multline*}
	\left(1+2m\la \right)\n{x_{k+1}-x}^2 + 2\la\lr{B(x_{k+1})-B(x_k),x-x_{k+1}} + \left(1-\la L\right)\n{x_{k+1}-x_k}^2  \\
	\leq \n{x_k-x}^2
	+ 2\la\lr{B(x_k)-B(x_{k-1}),x-x_{k}} + \frac{1}{2}\n{x_k-x_{k-1}}^2.
	\end{multline*}
    By denoting $\varepsilon:=\min\{\frac12-\la L,5m\la\}>0$, this inequality implies
	 \begin{equation}\label{eq:R linear recursion}
	  (1+4m\la)a_{k+1}+ b_{k+1} +\varepsilon\|x_{k+1}-x_k\|^2\leq a_k+\,b_k,
	 \end{equation}
	where the nonnegative sequences $(a_k)$ and $(b_k)$ are given by
	\begin{align*}
	a_k  \coloneqq\, &\frac{1}{2}\n{x_k-x}^2 \geq 0,\\
	b_k \coloneqq \, &\frac{1}{2} \n{x_k-x}^2
	+ 2\la\lr{B(x_k)-B(x_{k-1}),x-x_{k}} +
              \frac{1}{2}\n{x_k-x_{k-1}}^2 \\  \geq\, & \frac{1}{2} \n{x_k-x}^2 - 2\lambda L\n{x_k-x_{k-1}}\n{x_{k}-x} + \frac{1}{2}\n{x_k-x_{k-1}}^2\geq0.
	\end{align*}
	Using Lipschitzness of $B$, we have
	\begin{align}\label{eq:R linear under}
	 &(1+4m\la)a_{k+1}+ b_{k+1} +\varepsilon\|x_{k+1}-x_k\|^2 \notag \\
          &\quad= \left(1+4m\la-\frac{\varepsilon}{2}\right)a_{k+1}+
            \left(1+\frac{\varepsilon}{2}\right)b_{k+1}+\frac{3\varepsilon}{4}\|x_{k+1}-x_k\|^2
            - \varepsilon\la \lr{B(x_{k+1})-B(x_{k}),x-x_{k+1}}\notag \\
	 &\quad\geq \left(1+4m\la-\frac{\varepsilon}{2}\right)a_{k+1}+
         \left(1+\frac{\varepsilon}{2}\right)b_{k+1}+\frac{3\varepsilon}{4}\|x_{k+1}-x_k\|^2
         - \varepsilon\la L\|x_{k+1}-x_{k}\|\|x_{k+1}-x\| \notag\\
	 &\quad\geq \left(1+4m\la-\frac{3\varepsilon}{4}\right)a_{k+1}+ \left(1+\frac{\varepsilon}{2}\right)b_{k+1}+\frac{\varepsilon}{2}\|x_{k+1}-x_k\|^2.
	\end{align}
    Denote
    $\alpha\coloneqq\min\{1+4m\la-3\varepsilon/4,1+\varepsilon/2\}>1$,
    which is true due to  $\e\leq 5m\la$. Combining \eqref{eq:R linear recursion} and \eqref{eq:R linear under} yields $\alpha(a_{k+1}+b_{k+1}) \leq a_k+b_k$. Iterating this inequality gives
        $$ a_{k+1} \leq a_{k+1}+b_{k+1} \leq \frac{1}{\alpha}(a_k+b_k) \leq \dots \leq \frac{1}{\alpha^{k+1}}(a_0+b_0), $$
     which establishes that $x_k\to x$ with $R$-linear rate.	Since $x$ was chosen arbitrarily from $(A+B)^{-1}(0)$, it must be unique.
\end{proof}

\section{Forward-reflected-backward splitting with linesearch}\label{s:linesearch}
The algorithm presented in the previous section required information about the single-valued operator's Lipschitz constant in order to select an appropriate stepsize. In practice, this requirement is  undesirable for several reasons. Firstly, obtaining the Lipschitz constant (or an estimate) is usually non-trivial and often a computationally expensive problem itself. Secondly, as a global constant, the (global) Lipschitz constant can often lead to over-conservative stepsizes although local properties (around the current iterate) may permit the use of larger stepsizes and ultimately lead to faster convergence. Finally, when the single-valued operator is not Lipschitz continuous, any fixed stepsize scheme based on Lipschitz continuity will potentially fail to converge.

To address these shortcomings, most known methods can incorporate an additional procedure called
\emph{linesearch} (or \emph{backtracking}) which is run in each iteration. It is worth noting however, that in the more restrictive context of variational inequalities, the method proposed in \cite{malitsky2018golden} overcomes the aforementioned difficulties without resorting to a linesearch procedure.

In what follows, we show that the forward-reflected-backward method with such a linesearch procedure converges whenever the single-valued operator is \emph{locally Lipschitz}.

\begin{algorithm}[t]
    \caption{The forward-reflected-backward method with linesearch.\label{alg:ls}}
    {\bfseries Initialization:} Choose $x_0,x_{-1} \in\Hilbert$, $\la_0,\la_{-1}>0$, $\delta \in (0,1)$, and $\sigma \in (0,1)$.

    {\bfseries Iteration:} Having $x_k$, $\la_{k-1}$, and $B(x_{k-1})$, choose $\rho\in
    \{1,\sigma^{-1}\}$ and compute
    \begin{equation}
        \label{ls:xk+1}
        x_{k+1} := J_{\la_k A}\bigl(x_k - \la_k B(x_k) -
                \la_{k-1}(B(x_k)-B(x_{k-1}))\bigr),
            \end{equation}
    where $\la_k = \rho \la_{k-1}\sigma^i$ with $i$ being the smallest nonnegative integer satisfying
            \begin{equation}
                \label{ls:break}
                \la_k \n{B(x_{k+1})-B(x_k)} \leq
\frac{\delta}{2}\n{x_{k+1}-x_k}.
            \end{equation}
\end{algorithm}

\begin{remark}
	The parameter $\rho$ in Algorithm~\ref{alg:ls} has been introduced to allow for greater flexibility in the choice of possible stepsizes.
	Indeed, there are two possible scenarios for the value of $\la_k$ in the first iteration of the linesearch procedure (\emph{i.e.,} when $i=0$): either $\rho=\sigma^{-1}$ and $\la_k = \sigma^{-1}\la_{k-1}> \la_{k-1}$, or $\rho=1$ and $\la_k = \la_{k-1}$. The former, more aggressive scenario allows for the possibility of larger stepsizes at the price of a potential increase in the number of linesearch iterations.
\end{remark}

The following lemma shows that the  linesearch procedure described in Algorithm~\ref{alg:ls} is well-defined so long as the operator $B$ is locally Lipschitz continuous.
\begin{lemma}\label{lem:linesearch-stop}
	Suppose $B\colon \Hilbert\to\Hilbert$ is locally Lipschitz. Then the linesearch procedure in \eqref{ls:xk+1}--\eqref{ls:break} always terminates. i.e., $(\la_k)$ is well defined.
\end{lemma}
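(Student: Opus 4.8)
The plan is to argue by contradiction. Fix the iteration index $k$ and observe that within a single execution of the linesearch everything except $\la_k$ (and the point $x_{k+1}$ it produces) is frozen, namely $x_k$, $\la_{k-1}$, and $B(x_{k-1})$, and hence the vector $w := x_k - \la_{k-1}\bigl(B(x_k)-B(x_{k-1})\bigr)$. Writing $\la^{(i)} := \rho\,\la_{k-1}\sigma^i$ and $x^{(i)} := J_{\la^{(i)}A}\bigl(w - \la^{(i)}B(x_k)\bigr)$ for the candidate stepsize and candidate iterate at trial $i$, suppose the procedure never stops. Then the break condition \eqref{ls:break} fails for every nonnegative integer $i$, that is $\la^{(i)}\n{B(x^{(i)})-B(x_k)} > \tfrac{\delta}{2}\n{x^{(i)}-x_k}$. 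Since $\sigma\in(0,1)$ we have $\la^{(i)}\to 0$, and the goal is to show this is incompatible with the failing inequality.

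First I would control the candidate iterates as $\la^{(i)}\to 0$. Using nonexpansiveness of the resolvent together with a fixed point $(z_0,v_0)\in\gra A$, so that $J_{\la A}(z_0+\la v_0)=z_0$, one sees that $(x^{(i)})$ is bounded. More precisely, since the argument $w-\la^{(i)}B(x_k)\to w$ and $J_{\la A}\to P_{\overline{\dom A}}$ strongly as $\la\downarrow 0$, the candidates converge strongly, $x^{(i)}\to \overline{x} := P_{\overline{\dom A}}(w)$. Because $B$ is locally Lipschitz, hence continuous, this yields $B(x^{(i)})\to B(\overline{x})$; in particular $\n{B(x^{(i)})-B(x_k)}$ stays bounded, so the left-hand side $\la^{(i)}\n{B(x^{(i)})-B(x_k)}$ of \eqref{ls:break} tends to $0$.

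It then remains to separate two cases according to the limit $\overline{x}$. If $\overline{x}\neq x_k$, then $\n{x^{(i)}-x_k}\to\n{\overline{x}-x_k}>0$, so the right-hand side of \eqref{ls:break} is eventually bounded below by a positive constant while its left-hand side vanishes, contradicting the assumed failure. If instead $\overline{x}=x_k$, then for all large $i$ the point $x^{(i)}$ lies in a ball about $x_k$ on which $B$ is, say, $\ell$-Lipschitz; discarding the trivial indices with $x^{(i)}=x_k$ (for which \eqref{ls:break} holds outright) and dividing the failing inequality by $\n{x^{(i)}-x_k}$ gives $\la^{(i)}\ell > \tfrac{\delta}{2}$, which again contradicts $\la^{(i)}\to 0$. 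Either way we reach a contradiction, so the linesearch terminates.

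The main obstacle is the second paragraph, specifically establishing a strong limit for the candidate iterates in a general Hilbert space. Local Lipschitz continuity does not guarantee that $B$ is bounded on bounded sets when $\dim\Hilbert=\infty$, so boundedness of $(x^{(i)})$ alone is not enough to conclude that the left-hand side of \eqref{ls:break} vanishes; I genuinely need the strong convergence $J_{\la A}\to P_{\overline{\dom A}}$ of the resolvent to the projection as $\la\downarrow 0$ together with the continuity of $B$ in order to pass to the limit. A secondary point worth flagging is that the frozen term $\la_{k-1}(B(x_k)-B(x_{k-1}))$ does not disappear as $\la^{(i)}\to 0$, so $x^{(i)}$ need not converge to $x_k$; this is precisely why the two-case split on $\overline{x}$ is unavoidable rather than a single rate comparison.
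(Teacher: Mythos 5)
Your proof is correct and follows essentially the same route as the paper's: both establish, via nonexpansiveness of the resolvent and the strong convergence $J_{\la A}\to P_{\overline{\dom A}}$ as $\la\searrow 0$, that the candidate iterates converge to $P_{\overline{\dom A}}\bigl(x_k-\la_{k-1}(B(x_k)-B(x_{k-1}))\bigr)$, and both then derive a contradiction by showing this limit must equal $x_k$ and invoking local Lipschitzness of $B$ near $x_k$ to force $\rho\la_{k-1}\sigma^i$ to stay bounded away from zero. Your two-case split on whether $\overline{x}=x_k$ is just an explicit restructuring of the paper's ``on one hand / on the other hand'' step, and the subtleties you flag (the frozen term prevents $x^{(i)}\to x_k$ a priori; strong resolvent convergence is genuinely needed in infinite dimensions) are precisely the ones the paper's proof addresses.
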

\begin{proof}
	Denote $x_{k+1}(\la) := J_{\la A}(x_k - \la B(x_k) - \la_{k-1}(B(x_k)-B(x_{k-1})))$. From \cite[Theorem~23.47]{BC2010}, we have that $J_{\lambda A}(x_{k+1}(0))\to P_{\overline{\dom A}}(x_{k+1}(0))$ as $\la\searrow 0$ which, together with the nonexpansivity of $J_{\lambda A}$, yields
	\begin{align*}
	 &\n{x_{k+1}(\lambda)-P_{\overline{\dom A}}\,x_{k+1}(0) } \\
	 &\qquad \leq \n{x_{k+1}(\lambda)-J_{\lambda A}(x_{k+1}(0)) } + \n{J_{\lambda A}(x_{k+1}(0))-P_{\overline{\dom A}}(x_{k+1}(0)) } \\
	 &\qquad \leq \lambda\|B(x_k)\| + \n{J_{\lambda A}(x_{k+1}(0))-P_{\overline{\dom A}}(x_{k+1}(0)) }.
	\end{align*}
    By taking the limit as $\lambda\searrow0$, we deduce that $x_{k+1}(\lambda)\to P_{\overline{\dom A}}(x_{k+1}(0))$.

	Now, by way of a contradiction, suppose that the linesearch procedure in Algorithm~\ref{alg:ls} fails to terminate at the $k$-th iteration.
	Then, for all $\la =\rho \la_{k-1}\sigma^i$ with $i=0,1,\dots$, we have
	\begin{equation}\label{ls:lemma-terminate-1}
	\rho \la_{k-1}\sigma^i \n{B(x_{k+1}(\la))-B(x_k)} > \frac{\delta}{2} \n{x_{k+1}(\la)-x_k}.
	\end{equation}
    On one hand, taking the limit as $i\to\infty$ in \eqref{ls:xk+1} gives $P_{\overline{\dom A}}(x_{k+1}(0))=x_k$. On the other hand, since $B$ is locally Lipschitz at $x_k$ there exists $L>0$ such that for $i$ sufficiently large, we have
    $$ \rho \la_{k-1}\sigma^i \n{B(x_{k+1}(\la))-B(x_k)} > \frac{\delta}{2} \n{x_{k+1}(\la)-x_k} \geq \frac{\delta L}{2}\n{B(x_{k+1}(\la))-B(x_k)}. $$
    Dividing both sides by $\n{B(x_{k+1}(\la))-B(x_k)}$ gives $\delta L/2 < \rho\lambda_{k-1}\sigma^i$. Since $\sigma^i\to 0$ as $i\to\infty$, this inequality gives a contradiction which completes the proof.
\end{proof}

The next lemma is a direct extension of Lemma~\ref{lem:decreasing sequence}.
\begin{lemma}\label{lem:decreasing sequence-ls}
    Let $x\in(A+B)^{-1}(0)$ and let $(x_k)$ be
    generated by Algorithm~\ref{alg:ls}. Then there exists $\e > 0$ such that, for all
    $k\in\mathbb{N}$, we have
	\begin{multline}
		\n{x_{k+1} - x}^2 +
                2\la_k\lr{B(x_{k+1})-B(x_k),x-x_{k+1}} + \left(\frac 1
                    2 + \e \right)\n{x_{k+1}-x_k}^2  \\
		\leq \n{x_k-x}^2
		+ 2\la_{k-1}\lr{B(x_k)-B(x_{k-1}),x-x_{k}} + \frac 1 2
                \n{x_k-x_{k-1}}^2.
	\end{multline}
\end{lemma}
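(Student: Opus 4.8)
The plan is to run the proof of Lemma~\ref{lem:decreasing sequence} essentially verbatim, since the only structural difference between the fixed-stepsize scheme and Algorithm~\ref{alg:ls} lies in how the step-sizes are controlled. First I would apply Proposition~\ref{prop:abs} with exactly the same substitutions used in Lemma~\ref{lem:decreasing sequence} (namely $F:=\la_k A$, $u:=\la_k B(x)$, $d_1:=x_k$, $d_2:=x_{k+1}$, $u_0:=\la_{k-1}B(x_{k-1})$, $u_1:=\la_k B(x_k)$, $v_1:=\la_{k-1}B(x_k)$, $v_2:=\la_k B(x_{k+1})$). This yields the identical raw inequality, whose last term $-2\la_k\lr{B(x_{k+1})-B(x),x_{k+1}-x}$ is again nonnegative by monotonicity of $B$ and may be discarded, leaving only the cross term $2\la_{k-1}\lr{B(x_k)-B(x_{k-1}),x_k-x_{k+1}}$ to be bounded.

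The single genuine modification is in estimating this cross term. Whereas \eqref{eq:lipschitz B for lem:decreasing sequence} invoked the global constant $L$, here there is none; instead I would invoke the linesearch termination condition. The relevant instance is \eqref{ls:break} from the \emph{previous} iteration, i.e. with $k$ replaced by $k-1$, which reads $\la_{k-1}\n{B(x_k)-B(x_{k-1})}\leq\tfrac{\delta}{2}\n{x_k-x_{k-1}}$. Combining Cauchy--Schwarz with this bound gives $2\la_{k-1}\lr{B(x_k)-B(x_{k-1}),x_k-x_{k+1}}\leq\delta\n{x_k-x_{k-1}}\n{x_k-x_{k+1}}\leq\tfrac{\delta}{2}\bigl(\n{x_k-x_{k-1}}^2+\n{x_k-x_{k+1}}^2\bigr)$, where the last step is Young's inequality.

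Substituting this back produces an inequality of exactly the target shape, but with coefficient $1-\tfrac{\delta}{2}$ in front of $\n{x_{k+1}-x_k}^2$ on the left and $\tfrac{\delta}{2}$ in front of $\n{x_k-x_{k-1}}^2$ on the right, playing the roles of $1-\la_{k-1}L$ and $\la_{k-1}L$ in the original proof. Since $\delta\in(0,1)$, I would set $\e:=\tfrac{1-\delta}{2}>0$, so that $\tfrac12+\e=1-\tfrac{\delta}{2}$ matches the left coefficient exactly, while $\tfrac{\delta}{2}<\tfrac12$ allows the right coefficient to be enlarged to $\tfrac12$ (weakening the inequality in the permitted direction). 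This delivers the claimed bound.

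The point requiring care, rather than a serious obstacle, is that the constant $\e$ must be uniform in $k$: this is automatic because $\delta$ is a fixed algorithm parameter, so $\e=\tfrac{1-\delta}{2}$ serves every index at once, irrespective of how the adaptively chosen step-sizes $\la_k$ fluctuate. The only boundary detail to verify is the base index, since the estimate at step $k$ relies on \eqref{ls:break} having been enforced at step $k-1$; this is valid for $k\geq1$, and the $k=0$ case simply requires the initialization data to satisfy the analogous inequality $\la_{-1}\n{B(x_0)-B(x_{-1})}\leq\tfrac{\delta}{2}\n{x_0-x_{-1}}$.
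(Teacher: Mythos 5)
Your proof is correct and is essentially the paper's own argument: the paper's proof consists of the single remark that one repeats Lemma~\ref{lem:decreasing sequence} with the linesearch condition \eqref{ls:break} replacing the Lipschitz estimate \eqref{eq:lipschitz B for lem:decreasing sequence}, which is exactly what you carry out, with the explicit (and correct) choice $\e=(1-\delta)/2$. Your closing observation about the base case $k=0$ --- that the estimate needs $\la_{-1}\n{B(x_0)-B(x_{-1})}\leq\tfrac{\delta}{2}\n{x_0-x_{-1}}$ for the initialization data, or else the claim starts at $k\geq 1$ --- is a boundary detail the paper glosses over, so your write-up is if anything slightly more careful than the original.
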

\begin{proof}
   The proof is exactly the same as Lemma~\ref{lem:decreasing
   	sequence} with the only change being that instead of using Lipschitzness of $B$ to deduce the inequality \eqref{eq:lipschitz B for lem:decreasing sequence}, we use \eqref{ls:break}, which is well-defined due to Lemma~\ref{lem:linesearch-stop}.
\end{proof}

\begin{theorem}\label{th:linesearch}
    Let $\Hilbert$ be finite dimensional,  $A\colon \Hilbert\setto\Hilbert$ be maximally monotone, and
    $B\colon \Hilbert\to\Hilbert$ be monotone and locally Lipschitz continuous, and suppose
    that $(A+B)^{-1}(0)\neq\varnothing$. Then the sequence
    $(x_k)$ generated by Algorithm~\ref{alg:ls}
	converges to a point contained in $(A+B)^{-1}(0)$.
\end{theorem}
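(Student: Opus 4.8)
The plan is to mirror the structure of the proof of Theorem~\ref{th:main}, substituting the linesearch analogue (Lemma~\ref{lem:decreasing sequence-ls}) for Lemma~\ref{lem:decreasing sequence}, but with additional care at the two points where the fixed-stepsize proof secretly relied on having a uniform upper bound on $\la_k$ and on $B$ being globally $L$-Lipschitz. First I would invoke Lemma~\ref{lem:decreasing sequence-ls} to obtain the telescoping inequality and sum it, exactly as in \eqref{eq:thm2}, yielding
\begin{equation*}
\tfrac12\n{x_{k+1}-x}^2 + \e\sum_{i=0}^k\n{x_{i+1}-x_i}^2 \leq \n{x_0-x}^2 + 2\la_{-1}\lr{B(x_0)-B(x_{-1}),x-x_0} + \tfrac12\n{x_0-x_{-1}}^2.
\end{equation*}
The subtlety here is that the cross term $2\la_k\lr{B(x_{k+1})-B(x_k),x-x_{k+1}}$ must be absorbed; in the fixed-stepsize case this used $\la_k L\n{x_{k+1}-x_k}\n{x-x_{k+1}}$, but now I would instead use the linesearch stopping criterion \eqref{ls:break}, which directly gives $\la_k\n{B(x_{k+1})-B(x_k)}\leq\frac{\delta}{2}\n{x_{k+1}-x_k}$, so that the cross term is bounded below by $-\frac{\delta}{2}\n{x_{k+1}-x_k}\n{x-x_{k+1}}\geq -\frac{\delta}{4}(\n{x_{k+1}-x_k}^2+\n{x-x_{k+1}}^2)$. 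Since $\delta<1$ this still leaves a positive multiple of $\n{x_{k+1}-x}^2$ on the left. From this I conclude that $(x_k)$ is bounded and $\n{x_{k+1}-x_k}\to0$, which in turn forces $\la_k\n{B(x_{k+1})-B(x_k)}\to0$ via \eqref{ls:break}.

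Next I would extract a cluster point and show it solves the inclusion. Since $\Hilbert$ is finite dimensional and $(x_k)$ is bounded, there is a convergent subsequence $x_{k_j}\to\overline{x}$. Rewriting the iteration via its inclusion form \eqref{eq:inclusion iteration}, I would produce an element of $(A+B)(x_{k+1})$ that tends to $0$; here I must ensure the residual involves the terms $\frac{1}{\la_k}(x_k-x_{k+1})$ plus differences of $B$-values scaled by stepsizes. Because $B$ is only \emph{locally} Lipschitz, continuity of $B$ on the compact set containing $(x_{k_j})$ gives $B(x_{k_j})\to B(\overline{x})$, and the residual terms vanish provided the stepsizes stay bounded away from zero along the subsequence. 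Then demiclosedness of the maximally monotone graph of $A+B$ yields $0\in(A+B)(\overline{x})$.

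The main obstacle is precisely controlling the stepsizes $(\la_k)$ from below, since the convergence argument needs $\liminf_k\la_k>0$ to guarantee that $\frac{1}{\la_k}(x_k-x_{k+1})\to0$ and that the $\e$ in Lemma~\ref{lem:decreasing sequence-ls} is uniform. I expect this to follow from local Lipschitzness: along the bounded (hence precompact) trajectory, $B$ admits a uniform local Lipschitz constant $L$ on a neighborhood of the cluster set, and the linesearch then cannot drive $\la_k$ below roughly $\sigma\delta/(2L)$, because once $\la_k\leq\delta/(2L)$ the criterion \eqref{ls:break} is automatically satisfied and no further backtracking occurs. This is the step that genuinely uses finite dimensionality (to get precompactness of the whole trajectory and hence a single $L$). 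Finally, having established a lower bound $\la_k\geq\underline{\la}>0$, the remaining convergence of the full sequence follows exactly as in Theorem~\ref{th:main}: I would show that the limit in \eqref{eq:limit} exists (using that $\n{x_k-x_{k+1}}\to0$, $B$ is continuous on the relevant compact set, and the stepsizes are now bounded above and below), identify it with $\lim_k\n{x_k-\overline{x}}^2$, and conclude weak---here actually norm, since $\Hilbert$ is finite dimensional---convergence of $(x_k)$ to a single element of $(A+B)^{-1}(0)$ by Lemma~\ref{lem:weakly convergent sequences}.
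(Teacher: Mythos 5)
Your proposal follows essentially the same route as the paper's own proof: telescope Lemma~\ref{lem:decreasing sequence-ls}, use the linesearch criterion \eqref{ls:break} in place of global Lipschitz continuity to absorb the cross terms (your Young inequality has a harmless factor-of-two slip), invoke finite dimensionality so that $B$ is Lipschitz on a bounded set containing the iterates, deduce that $(\la_k)$ is bounded away from zero via the backtracking mechanism, and finish as in Theorem~\ref{th:main}. One correction: your parenthetical claim that the stepsizes are bounded above is false in general (if, say, $B=0$ and $\rho=\sigma^{-1}$ is chosen at every iteration, then $\la_k$ grows geometrically), but it is also unnecessary, since the vanishing of the term $2\la_{k-1}\lr{B(x_k)-B(x_{k-1}),\overline{x}-x_k}$ in \eqref{eq:limit} follows directly from \eqref{ls:break} together with $\n{x_k-x_{k-1}}\to 0$, which is exactly the estimate you already used to obtain boundedness of $(x_k)$.
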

\begin{proof}
    We argue similarly to Theorem~\ref{th:main} but using Lemma~\ref{lem:decreasing sequence-ls} in place of Lemma~\ref{lem:decreasing sequence}, and \eqref{ls:break} in place of Lipschitzness of $B$. This yields  \eqref{eq:lipschitzness of B for th:main} from which we deduce that $(x_k)$ is bounded and $\|x_k-x_{k+1}\|\to 0$. As a locally Lipschitz operator on finite dimensional space, $B$ is Lipschitz on bounded sets. Thus, since $(x_k)$ is bounded, there exists a constant $L>0$ such that
      \begin{equation}
      \label{eq:lip on bounded set}
      \|B(x_{k+1})-B(x_k)\|\leq L \|x_{k+1}-x_k\|\quad\forall k\in\mathbb{N}.
      \end{equation}
   By combining \eqref{ls:break} and \eqref{eq:lip on bounded set}, we see that $(\lambda_k)$ is bounded away from zero. The remainder of the proof is the same as Theorem~\ref{th:main}.
\end{proof}

\section{Relaxed inertial forward-reflected-backward splitting}\label{s:relaxed inertial}
In this section, we consider a relaxed inertial variant of the forward-reflected-backward splitting algorithm. Such variants are of interest in practice because they have the potential to improve performance as well as the range of admissible stepsizes. A treatment of a relaxed inertial variant of the forward-backward method with $B$ cocoercive and its relation to Nesterov-type acceleration techniques can be found in \cite{attouch2018convergence}.

Consider the monotone inclusion
  $$ \text{find~}x\in\Hilbert\text{ such that }0\in (A+B)(x), $$
where $A\colon \Hilbert\setto\Hilbert$ is maximally monotone, and $B\colon \Hilbert\to\Hilbert$ is either monotone and $L$-Lipschitz continuous or $1/L$-cocoercive.  The relaxed inertial algorithm is given by
\begin{equation}\label{eq:inertia_scheme_first}
\left\{\begin{aligned}
 z_{k+1} &:= J_{\la A}\bigl( x_k - \la B(x_k) - \frac{\la}{\b }(B(x_k)- B(x_{k-1})) + \frac{\a }{\b }(x_k-x_{k-1}) \bigr) \\
 x_{k+1} &:= (1-\b)  x_k + \b z_{k+1}
\end{aligned}\right.
\end{equation}
for all $k\in \N$ and for appropriately chosen parameters $\a\geq 0$
and $\b,\la>0$ whose precise form depends on the properties of
$B$. By denoting $B':=B-\frac{\a }{\la}I$, the scheme can be expressed
as
\begin{equation}\label{eq:inertia_scheme}
\left\{\begin{aligned}
z_{k+1} &:=
J_{\la A}\bigl( x_k - \la B(x_k) - \frac{\la}{\b }(B'(x_k)- B'(x_{k-1})) \bigr) \\
x_{k+1} &:= (1-\b) x_k + \b z_{k+1}
\end{aligned}\right.\qquad\forall k\in\mathbb{N}.
\end{equation}

To prove convergence of this scheme, we first prove two lemmas.
\begin{lemma}\label{lem:B lip and coco}
	Suppose $B\colon \Hilbert\to\Hilbert$ is monotone and $\rho\geq 0$. Then the operator
	$B':=B-\rho I$ is $L'$-Lipschitz with $L'$  given by
	\begin{equation}
	\label{eq:L'}
	L':=\begin{cases}
	L + \rho &\text{if $B$ is $L$-Lipschitz}, \\
	L-\rho            &\text{if $B$ is $1/L$-cocoercive and $\rho\leq\frac{L}{2}$}, \\
	\rho              &\text{if $B$ is $1/L$-cocoercive and $\rho>\frac{L}{2}$}.
	\end{cases}
	\end{equation}
\end{lemma}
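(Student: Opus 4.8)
The plan is to handle the $L$-Lipschitz case separately from the two cocoercive cases, since the former is an immediate consequence of the triangle inequality while the latter two require expanding a squared norm. When $B$ is $L$-Lipschitz, I would write $B'(x)-B'(y) = (B(x)-B(y)) - \rho(x-y)$ and apply the triangle inequality together with $L$-Lipschitzness of $B$ to obtain $\n{B'(x)-B'(y)} \leq \n{B(x)-B(y)} + \rho\n{x-y} \leq (L+\rho)\n{x-y}$, which is the first case.

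For the two cocoercive cases I would work with the squared norm. Writing $u := x-y$ and $w := B(x)-B(y)$, so that $B'(x)-B'(y) = w - \rho u$, the expansion $\n{w-\rho u}^2 = \n{w}^2 - 2\rho\lr{u,w} + \rho^2\n{u}^2$ is the natural starting point. First I would record two consequences of $1/L$-cocoercivity: the defining inequality $\lr{u,w} \geq \frac{1}{L}\n{w}^2$, and the Lipschitz-type bound $\n{w} \leq L\n{u}$ that follows by combining cocoercivity with the Cauchy--Schwarz inequality. Since $\rho \geq 0$, substituting the cocoercivity inequality into the cross term yields $\n{w-\rho u}^2 \leq \bigl(1 - \frac{2\rho}{L}\bigr)\n{w}^2 + \rho^2\n{u}^2$.

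The final step is a case split on the sign of the coefficient $1 - \frac{2\rho}{L}$. When $\rho \leq \frac{L}{2}$ this coefficient is nonnegative, so using $\n{w}^2 \leq L^2\n{u}^2$ gives $\n{w-\rho u}^2 \leq (L^2 - 2\rho L + \rho^2)\n{u}^2 = (L-\rho)^2\n{u}^2$; taking square roots (and noting $L-\rho \geq 0$) produces the second case. When $\rho > \frac{L}{2}$ the coefficient is negative, so the term $\bigl(1-\frac{2\rho}{L}\bigr)\n{w}^2$ is nonpositive and may simply be discarded, leaving $\n{w-\rho u}^2 \leq \rho^2\n{u}^2$, which is the third case. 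I do not anticipate a genuine obstacle here; the only point requiring care is to select the correct estimate for $\n{w}^2$ according to the sign of its coefficient — tightening via $\n{w} \leq L\n{u}$ when the coefficient is positive, and dropping the term using $\n{w}^2 \geq 0$ when it is negative. Both bounds are in fact attained (taking $B = LI$ for the second case and $B = 0$ for the third), which confirms that the stated constants cannot be improved.
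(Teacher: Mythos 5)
Your proposal is correct and follows essentially the same route as the paper's proof: the triangle inequality for the $L$-Lipschitz case, and for the cocoercive cases an expansion of $\n{w-\rho u}^2$ combined with the cocoercivity inequality, followed by the same sign-based case split on the coefficient $1-\frac{2\rho}{L}$ (using $\n{w}\leq L\n{u}$ when it is nonnegative, discarding the term when it is negative). Your added remark on tightness of the constants is a nice bonus but not part of the paper's argument.
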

\begin{proof}
	Let $x,y\in\Hilbert$. When $B$ is $L$-Lipschitz, we have
	\begin{equation*}
	\n{(B-\rho I)x-(B-\rho I)y} \leq \n{Bx-By} + \rho \n{x-y}
                                          \leq (L+\rho)\n{x-y},
	\end{equation*}
	which establishes the first case. For the second and third cases, first observe that $1/L$-cocoercivity of $B$ yields
	\begin{equation*}
	\begin{aligned}
	\n{(B-\rho I)x-(B-\rho I)y}^2
	&= \n{Bx-By}^2-2\rho\lr{Bx-By,x-y} +\rho^2\n{x-y}^2 \\
	&\leq \left(1 -\frac{2\rho}{L} \right)\n{Bx-By}^2+\rho^2\n{x-y}^2.
	\end{aligned}
	\end{equation*}
	On one hand, if $\rho > \frac{L}{2}$, then $1-\frac{2\rho}{L}<0$ and $\rho$-Lipschitzness of $B'$ follows. On the other hand, if $\rho \leq \frac{L}{2}$, then
	\begin{align*}
	\n{(B-\rho I)x-(B-\rho I)y}^2
	& \leq  \left(L^2 -\frac{2\rho}{L} L^2 +\rho^2 \right)\n{x-y}^2  \\
	&= (L-\rho)^2\n{x-y}^2,
	\end{align*}
	which shows that $B'$ is $(L-\rho)$-Lipschitz. The proof is now complete.
\end{proof}
Note that it is possible to slightly improve the estimate of $L'$ in the case when $B$ is $L$-Lipschitz and monotone to $L'=\sqrt{L^2+\rho^2}$.
However, the benefits of using a new bound are minimal since, in our case $\rho $, will take small values relative to $L$.

In the following lemma, we use the following form of \eqref{abs_ineq} from Proposition~\ref{prop:abs}.
\begin{multline}\label{abs_ineq2}
\n{d_{2}-x}^2 + 2\lr{u-u_1, x-d_{2}} \leq
\n{d_{1}-x}^2 + 2\lr{v_{1}-u_{0}, x-d_{1}}\\ +
2\lr{v_1-u_{0},d_1-d_{2}} - \n{d_{1}-d_2}^2.
\end{multline}

\begin{lemma}
  Let $x\in (A+B)^{-1}(0)$, let $(x_k)$ be given by \eqref{eq:inertia_scheme} and consider constants $\alpha\geq0$ and $\beta,\la>0$. Then
\begin{multline*}
(1-\a )\n{x_{k+1}-x}^2 +2\la\lr{B'(x_{k+1})-B'(x_k),x-x_{k+1}} + b_{k+1} \\
\leq (1-\a )\|x_k-x\|^2 + 2\la\lr{B'(x_k)-B'(x_{k-1}),x-x_k}+  b_k\\
+\frac{\la L'}{\b}\n{x_k-x_{k-1}}^2 -\left(\frac{2-\b-\la  L'}{\b}-\a \right) \n{x_{k+1}-x_k}^2.
\end{multline*}
where $b_k:=2\la\lr{B(x)-B(x_k),x-x_{k}}\geq0$ and $L'$ is the Lipschitz constant of $B'$.
\end{lemma}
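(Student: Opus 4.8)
The plan is to apply Proposition~\ref{prop:abs} in the form \eqref{abs_ineq2} to the $z$-update in \eqref{eq:inertia_scheme}, and then convert the resulting estimate involving $z_{k+1}$ into one involving $x_{k+1}=(1-\b)x_k+\b z_{k+1}$ using the relaxation identity. First I would identify $F:=\la A$, the resolvent argument $d_1-u_1-(v_1-u_0)$ with the quantity $x_k-\la B(x_k)-\tfrac{\la}{\b}(B'(x_k)-B'(x_{k-1}))$, so that $d_2=z_{k+1}$, and then choose the substitutions
\begin{equation*}
d_1:=x_k,\quad u_1:=\la B(x_k),\quad u_0:=\tfrac{\la}{\b}B'(x_{k-1}),\quad v_1:=\tfrac{\la}{\b}B'(x_k),\quad u:=\la B(x).
\end{equation*}
With these, $d_1-u_1-(v_1-u_0)=x_k-\la B(x_k)-\tfrac{\la}{\b}(B'(x_k)-B'(x_{k-1}))$ as required, and the term $2\lr{u-u_1,x-d_2}=2\la\lr{B(x)-B(x_k),x-z_{k+1}}$, which I expect will combine with the monotonicity inequality $\lr{B(x_{k+1})-B(x),x_{k+1}-x}\geq0$ to produce the $b_{k+1}$ terms after the relaxation step is applied.

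The main obstacle is the bookkeeping needed to pass from $z_{k+1}$ to $x_{k+1}$. Because $x_{k+1}-x=(1-\b)(x_k-x)+\b(z_{k+1}-x)$ and $x_{k+1}-x_k=\b(z_{k+1}-x_k)$, every squared-norm and inner-product term in \eqref{abs_ineq2} that is expressed in terms of $z_{k+1}$ must be rewritten in terms of $x_{k+1}$. The key algebraic identity here is the convexity-type relation
\begin{equation*}
\n{z_{k+1}-x}^2=\tfrac1\b\n{x_{k+1}-x}^2-\tfrac{1-\b}{\b}\n{x_k-x}^2+\tfrac{1-\b}{\b^2}\n{x_{k+1}-x_k}^2,
\end{equation*}
together with $\n{d_1-d_2}^2=\n{x_k-z_{k+1}}^2=\tfrac1{\b^2}\n{x_{k+1}-x_k}^2$. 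I anticipate that after multiplying \eqref{abs_ineq2} through by $\b$ and substituting these identities, the coefficient of $\n{x_{k+1}-x}^2$ becomes $(1-\b)+\b\cdot\tfrac1\b=\ldots$, collapsing to give the factor $(1-\a)$ on the left once the inertial terms are accounted for, while the $\n{x_{k+1}-x_k}^2$ terms assemble into the coefficient $\tfrac{2-\b-\la L'}{\b}-\a$ claimed in the statement.

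The remaining step is to handle the cross terms involving $B'$. The inner product $2\lr{v_1-u_0,d_1-d_2}=\tfrac{2\la}{\b}\lr{B'(x_k)-B'(x_{k-1}),x_k-z_{k+1}}=\tfrac{2\la}{\b^2}\lr{B'(x_k)-B'(x_{k-1}),x_k-x_{k+1}}$ should be estimated via Cauchy--Schwarz and Lipschitzness of $B'$ (from Lemma~\ref{lem:B lip and coco}), yielding a bound of the form $\tfrac{\la L'}{\b^2}(\n{x_k-x_{k-1}}^2+\n{x_{k+1}-x_k}^2)$, whose first part supplies the $\tfrac{\la L'}{\b}\n{x_k-x_{k-1}}^2$ telescoping term and whose second part is absorbed into the $\n{x_{k+1}-x_k}^2$ coefficient. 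Meanwhile the terms $2\la\lr{B'(x_{k+1})-B'(x_k),x-x_{k+1}}$ and $2\la\lr{B'(x_k)-B'(x_{k-1}),x-x_k}$ are retained verbatim as the telescoping inner-product quantities, and the $b_k,b_{k+1}$ terms emerge from the monotonicity of $B$. The hard part is verifying that all coefficients of $\n{x_{k+1}-x_k}^2$ combine into exactly $\tfrac{2-\b-\la L'}{\b}-\a$ with the correct sign, but this is routine once the relaxation identities above are in place.
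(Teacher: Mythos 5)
Your setup coincides with the paper's proof: the same substitutions into Proposition~\ref{prop:abs} in the form \eqref{abs_ineq2}, the same relaxation identities for passing from $z_{k+1}$ to $x_{k+1}$, multiplication by $\b$, and the same Lipschitz/Young estimate of $2\lr{v_1-u_{0},d_1-d_{2}}$, which indeed assembles the coefficient $\frac{2-\b-\la L'}{\b}-\a$. However, there is a genuine gap at the centre of the argument, namely your account of where $b_{k+1}$, $b_k$ and the left-hand inner product $2\la\lr{B'(x_{k+1})-B'(x_k),x-x_{k+1}}$ come from. You say these ``emerge from the monotonicity of $B$'', via the inequality $\lr{B(x_{k+1})-B(x),x_{k+1}-x}\geq0$. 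That inequality is exactly the statement $b_{k+1}\geq 0$, so invoking it can only let you \emph{discard} $b_{k+1}$; it cannot make $b_{k+1}$ \emph{appear} on the left-hand side with coefficient $1$, which is what the lemma requires (this is precisely the quantity that telescopes in Theorem~\ref{th:relaxed inertial}). What is actually needed is an exact algebraic identity, with no inequality at all: writing $x-z_{k+1}=\frac{1}{\b}(x-x_{k+1})-\frac{1-\b}{\b}(x-x_k)$, adding and subtracting $\frac{1}{\b}b_{k+1}$, and using $B=B'+\frac{\a}{\la}I$, one gets
\begin{multline*}
2\la\lr{B(x)-B(x_k),x-z_{k+1}} + \frac{1-\b}{\b}b_k - \frac{1}{\b}b_{k+1}
= \frac{2\la}{\b}\lr{B'(x_{k+1})-B'(x_k),x-x_{k+1}} \\
+ \frac{\a}{\b}\bigl(\n{x_k-x}^2-\n{x_{k+1}-x_k}^2-\n{x_{k+1}-x}^2\bigr).
\end{multline*}
This single identity simultaneously manufactures the left-hand telescoping term in $B'$ (which is \emph{not} ``retained verbatim'' from \eqref{abs_ineq2} --- no such term appears there, since $v_2$ is absent from that form), produces the $b$ terms with the correct coefficients, and generates the inertial norm expansion that turns the coefficients of $\n{x_{k+1}-x}^2$ and $\n{x_k-x}^2$ into $(1-\a)$. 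Only the right-hand term $2\la\lr{B'(x_k)-B'(x_{k-1}),x-x_k}$ is inherited directly from \eqref{abs_ineq2}.

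Monotonicity of $B$ enters the proof in just two minor places: to assert $b_k\geq0$, as stated in the lemma, and at the very end to weaken the coefficient that the computation actually produces on the right-hand side after multiplying by $\b$, namely $(1-\b)b_k$, up to $b_k$. Once you replace your ``combine with monotonicity'' step by the identity above, the rest of your plan --- the identity for $\n{z_{k+1}-x}^2$, the relation $\n{x_k-z_{k+1}}^2=\frac{1}{\b^2}\n{x_{k+1}-x_k}^2$, and the estimate $\frac{\la L'}{\b}\bigl(\n{x_k-x_{k-1}}^2+\n{x_{k+1}-x_k}^2\bigr)$ --- goes through exactly as in the paper.
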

\begin{proof}
Let $u:=\la B(x)$. As $u\in -\la A(x)$, applying \eqref{abs_ineq2} with
\begin{equation*}
  \begin{split}
    F&:=\lambda A\quad\\
     u_1&:=\la B(x_k) \quad \\
  \end{split}
\qquad
  \begin{split}
       d_1&:=x_k \\
      d_{2}&:=z_{k+1}
\end{split}
\qquad
\begin{split}
 u_{0}& :=(\la / \b)  B'(x_{k-1}) \quad \\ v_1 &:=(\la/\b) B'(x_k)
\end{split}
\end{equation*}
yields the inequality
\begin{multline}\label{inert:est1}
\|z_{k+1}-x\|^2 + 2\la\lr{B(x)-B(x_k),x-z_{k+1}} \leq \|x_k-x\|^2 - \n{x_k-z_{k+1}}^2\\ +  \frac{2\la}{\b }\lr{B'(x_k)-B'(x_{k-1}),x-x_k} +\frac{2\la}{\b }\lr{B'(x_k)-B'(x_{k-1}),x_k-z_{k+1}} .
\end{multline}
Using the identity $z_{k+1}=\bigl(x_{k+1}-(1-\b)x_k\bigr)/\b$ and the definition of $B'$, the second term in \eqref{inert:est1} can be expressed as
\begin{align*}
& 2\la\lr{B(x)-B(x_k),x-z_{k+1}} + \frac{1-\b}{\b}b_k - \frac{1}{\b}b_{k+1}\\
&= \frac{2\la}{\b }\lr{B'(x_{k+1})-B'(x_k),x-x_{k+1}} +\frac{2\a}{\b}\lr{x_{k+1}-x_k,x-x_{k+1}} \\
&= \frac{2\la}{\b}\lr{B'(x_{k+1})-B'(x_k),x-x_{k+1}} + \frac{\a}{\b}\bigl(\n{x_k-x}^2-\n{x_{k+1}-x_k}^2-\n{x_{k+1}-x}^2\bigr)
\end{align*}
Substituting this back into \eqref{inert:est1} and using  $z_{k+1}-x_k=(x_{k+1}-x_k)/\b$ gives
\begin{multline}\label{inert:est2}
\|z_{k+1}-x\|^2-\frac{\a }{\b}\n{x_{k+1}-x}^2 +\frac{2\la}{\b}\lr{B'(x_{k+1})-B'(x_k),x-x_{k+1}} + \frac{1}{\b}b_{k+1}\\
 \leq \left(1-\frac{\a }{\b}\right)\|x_k-x\|^2 + \frac{2\la}{\b }\lr{B'(x_k)-B'(x_{k-1}),x-x_k}+ \frac{1-\b}{\b}b_k \\
+\frac{2\la}{\b }\lr{B'(x_k)-B'(x_{k-1}),x_k-z_{k+1}} +\left(\frac{\a }{\b}-\frac{1}{\b^2}\right)\n{x_{k+1}-x_k}^2.
\end{multline}
By using the identity
\begin{equation*}
\n{z_{k+1}-x}^2 = \frac{1}{\b}\n{x_{k+1}-x}^2 -
\frac{1-\b}{\b}\n{x_k-x}^2 + \frac{1-\b}{\b^2}\n{x_{k+1}-x_k}^2
\end{equation*}
in \eqref{inert:est2} and multiplying both sides by $\b$, we obtain
\begin{multline}\label{inert:est3}
(1-\a )\n{x_{k+1}-x}^2 +2\la \lr{B'(x_{k+1})-B'(x_k),x-x_{k+1}}
+ b_{k+1}\\
\leq (1-\a ) \|x_k-x\|^2 + 2\la \lr{B'(x_k)-B'(x_{k-1}),x-x_k}+ (1-\b)b_k \\
+2\la\lr{B'(x_k)-B'(x_{k-1}),x_k-z_{k+1}}-\left(\frac{2-\b}{\b}-\a  \right)\n{x_{k+1}-x_k}^2  .
\end{multline}
Since $B'$ is $L'$-Lipschitz, the second last term can be estimated by
\begin{align*}
2\la\lr{B'(x_k)-B'(x_{k-1}),x_k-z_{k+1}}
&= \frac{2\la}{\b}\lr{B'(x_k)-B'(x_{k-1}),x_k-x_{k+1}} \\
&\leq \frac{\la L'}{\b}\left( \n{x_k-x_{k-1}}^2 + \n{x_{k+1}-x_k}^2 \right).
\end{align*}
The claimed inequality follows by substituting this estimate back into \eqref{inert:est3}.
\end{proof}

\begin{theorem}\label{th:relaxed inertial}
    Let $A\colon \Hilbert\setto\Hilbert$ be maximally monotone and let
    $B\colon \Hilbert\to\Hilbert$ be monotone with $(A+B)^{-1}(0)\neq\varnothing$.
    Suppose $\a\in[0,1)$, $\beta\in(0,1]$, $\la>0$ and either
    \begin{enumerate}[label=(\alph*)]
    	\item\label{it:a} $B$ is $L$-Lipschitz and
      \begin{equation}
      \label{ineq:a}
      \la < \min\left\{\frac{2-\beta -\a \b -2\a}{2L}, \frac{1-\a-\a
              \b}{\b L}\right\}, \quad \text{or}
  \end{equation}
    	\item\label{it:b} $B$ is $(1/L)$-cocoercive, $\alpha<\frac{2-\beta}{2+\beta}$ and
          \begin{equation}
          \label{ineq:b}
          \la <  \min\left\{ \frac{2-\b -\a \b + 2\a}{2L},\frac{1-\a+\a\b}{\b L}\right\}.
          \end{equation}
    \end{enumerate}
    Given $x_0,x_{-1}\in\Hilbert$, define the sequences
    $(x_k)$ and $(z_k)$ according to \eqref{eq:inertia_scheme_first}
    Then $(x_k)$ converges weakly to a point in $(A+B)^{-1}(0)$.
\end{theorem}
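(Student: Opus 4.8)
The plan is to adapt the proof of Theorem~\ref{th:main}, using the lemma immediately preceding this theorem as the basic energy estimate. Fix $x\in(A+B)^{-1}(0)$ and define the Lyapunov functional
\[ \mathcal{F}_k := (1-\a)\n{x_k-x}^2 + 2\la\lr{B'(x_k)-B'(x_{k-1}),x-x_k} + b_k + \tfrac{\la L'}{\b}\n{x_k-x_{k-1}}^2. \]
Moving the term $\tfrac{\la L'}{\b}\n{x_{k+1}-x_k}^2$ to the left in that lemma's inequality rewrites it as $\mathcal{F}_{k+1}\le \mathcal{F}_k - c\,\n{x_{k+1}-x_k}^2$ with $c:=\tfrac{2-\b-2\la L'}{\b}-\a$. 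The first task is therefore to check, recalling $B'=B-\tfrac{\a}{\la}I$ so that $\rho=\a/\la$ in Lemma~\ref{lem:B lip and coco}, that the stepsize restrictions \eqref{ineq:a} and \eqref{ineq:b} are precisely equivalent to the two positivity requirements $c>0$ and $1-\a-\b\la L'>0$. This is routine but case-dependent: in case~\ref{it:a} one has $L'=L+\a/\la$, so $c>0\iff\la<\tfrac{2-\b-\a\b-2\a}{2L}$ and $1-\a-\b\la L'>0\iff\la<\tfrac{1-\a-\a\b}{\b L}$, matching \eqref{ineq:a}; case~\ref{it:b} is analogous using the three regimes for $L'$, with the extra hypothesis $\a<\tfrac{2-\b}{2+\b}$ covering the regime $\rho>L/2$ where $L'=\a/\la$.

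Granting $c>0$, the functional $(\mathcal{F}_k)$ is nonincreasing. Estimating the inner-product term by Young's inequality with parameter $\b$ and discarding $b_k\ge0$ gives the lower bound $\mathcal{F}_k\ge(1-\a-\b\la L')\n{x_k-x}^2\ge0$, so $(\mathcal{F}_k)$ converges, $(x_k)$ is bounded, and $\sum_k\n{x_{k+1}-x_k}^2<\infty$; in particular $\n{x_{k+1}-x_k}\to0$. Since $z_{k+1}-x_k=\tfrac1\b(x_{k+1}-x_k)$, also $\n{z_{k+1}-x_k}\to0$, so $(z_k)$ is bounded and shares the weak cluster points of $(x_k)$. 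Rewriting \eqref{eq:inertia_scheme} as the inclusion $w_k\in(A+B)(z_{k+1})$, where $w_k:=\tfrac1\la(x_k-z_{k+1})-B(x_k)+B(z_{k+1})-\tfrac1\b(B'(x_k)-B'(x_{k-1}))$, Lipschitz continuity of $B$ and $B'$ together with $\n{x_k-z_{k+1}}\to0$ and $\n{x_k-x_{k-1}}\to0$ give $w_k\to0$. Demiclosedness of $\gra(A+B)$ then shows that every weak cluster point $\overline{x}$ of $(x_k)$ satisfies $0\in(A+B)(\overline{x})$.

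Taking $x=\overline{x}$ in $\mathcal{F}_k$, the cross term and the $\n{x_k-x_{k-1}}^2$ term vanish in the limit, so $\lim_k\bigl[(1-\a)\n{x_k-\overline{x}}^2+b_k\bigr]$ exists. The one genuinely new difficulty relative to Theorem~\ref{th:main} is that the gap term $b_k=2\la\lr{B(x_k)-B(\overline{x}),x_k-\overline{x}}\ge0$ need not vanish on its own, so I cannot yet conclude that $\lim_k\n{x_k-\overline{x}}$ exists. I expect this to be the main obstacle, and I would overcome it by exploiting the inclusion a second time. Since $a_k:=\tfrac1\la(x_k-z_{k+1})-B(x_k)-\tfrac1\b(B'(x_k)-B'(x_{k-1}))\in A(z_{k+1})$ and $-B(\overline{x})\in A(\overline{x})$, monotonicity of $A$ yields $\lr{z_{k+1}-\overline{x},a_k+B(\overline{x})}\ge0$ for all $k$; rearranging, using $\n{x_k-z_{k+1}}\to0$ and $B'(x_k)-B'(x_{k-1})\to0$, and replacing $z_{k+1}$ by $x_k$ up to a vanishing error gives $\limsup_k\lr{x_k-\overline{x},B(x_k)-B(\overline{x})}\le0$. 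Since monotonicity of $B$ makes this quantity nonnegative, it follows that $b_k\to0$, whence $\lim_k\n{x_k-\overline{x}}^2=\tfrac{1}{1-\a}\lim_k\mathcal{F}_k$ exists. As $\overline{x}$ was an arbitrary weak cluster point, Lemma~\ref{lem:weakly convergent sequences} gives weak convergence of $(x_k)$ to a point of $(A+B)^{-1}(0)$, completing the argument.
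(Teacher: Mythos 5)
Your proof is correct, and its skeleton is the same as the paper's: the lemma preceding the theorem supplies the energy estimate; the stepsize hypotheses \eqref{ineq:a}--\eqref{ineq:b} are translated, via Lemma~\ref{lem:B lip and coco} with $\rho=\a/\la$, into positivity of your two constants $c$ and $1-\a-\b\la L'$, which is exactly the paper's single condition $\e:=\min\bigl\{\tfrac{2-\b(1+\a)}{2},\tfrac{1-\a}{\b}\bigr\}-\la L'>0$; monotonicity of the functional then gives boundedness of $(x_k)$ and $\sum_k\n{x_{k+1}-x_k}^2<\infty$; and cluster points are identified as zeros by demiclosedness of $\gra(A+B)$. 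Where you genuinely add value is the endgame. The paper closes with ``the remainder of the proof follows a similar argument to Theorem~\ref{th:main}'', and this conceals precisely the obstacle you isolated: unlike in Theorem~\ref{th:main}, the Lyapunov functional here carries the nonnegative term $b_k=2\la\lr{B(x_k)-B(\overline{x}),x_k-\overline{x}}$, which does not vanish merely because $\n{x_{k+1}-x_k}\to0$ and $(x_k)$ is bounded, nor does weak subsequential convergence to $\overline{x}$ help, since $B$ is not weakly continuous. Your fix---a second use of the monotonicity of $A$, pairing $a_k\in A(z_{k+1})$ against $-B(\overline{x})\in A(\overline{x})$ to obtain $\limsup_k\lr{x_k-\overline{x},B(x_k)-B(\overline{x})}\le0$, hence $b_k\to0$ by monotonicity of $B$---is correct: the right-hand side of the resulting inequality tends to zero because $(z_k)$ is bounded, $x_k-z_{k+1}=\tfrac{1}{\b}(x_k-x_{k+1})\to0$, and $B'(x_k)-B'(x_{k-1})\to0$ by Lipschitz continuity of $B'$, and the substitution of $z_{k+1}$ by $x_k$ costs only a vanishing error. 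This yields $\lim_k\n{x_k-\overline{x}}^2=\tfrac{1}{1-\a}\lim_k\mathcal{F}_k$, after which Lemma~\ref{lem:weakly convergent sequences} finishes the argument exactly as in the paper. In short: you follow the paper's route through the cluster-point analysis, and then supply a necessary supplementary argument (valid for any $x\in(A+B)^{-1}(0)$, not just cluster points) that the published proof leaves implicit; your write-up is, on this point, more complete than the paper's own.
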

\begin{figure}[t]
    \begin{subfigure}[b]{0.49\linewidth}
        \centering
		\begin{tikzpicture}[scale=0.75]
		\definecolor{color0}{rgb}{0.917647058823529,0.917647058823529,0.949019607843137}
		\begin{axis}[
		title={Monotone case},
		xlabel={$\beta$},
		ylabel={$\alpha$},
		xmin=0, xmax=1,
		ymin=0, ymax=1,
		tick align=outside,
		xmajorgrids,
		x grid style={white},
		ymajorgrids,
		y grid style={white},
		axis line style={white},
		axis background/.style={fill=color0}
		]
		\addplot graphics [includegraphics cmd=\pgfimage,xmin=0, xmax=1, ymin=0, ymax=1] {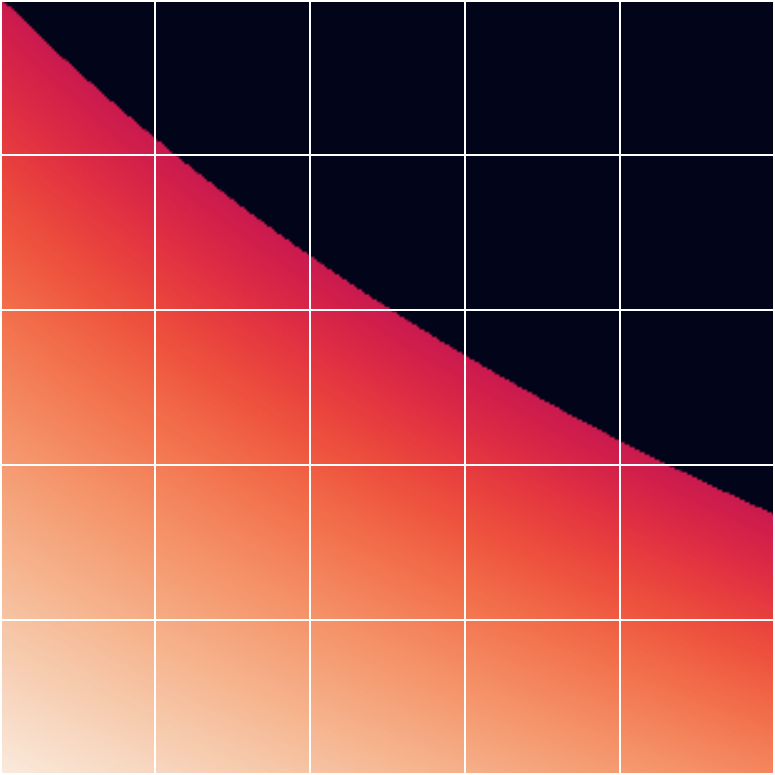};
		\end{axis}
		\end{tikzpicture}
    \end{subfigure}
    \begin{subfigure}[b]{0.49\textwidth}
        \centering
		\begin{tikzpicture}[scale=0.75]
		\definecolor{color0}{rgb}{0.917647058823529,0.917647058823529,0.949019607843137}
		\begin{axis}[
		title={Cocoercive case},
		xlabel={$\beta$},
		ylabel={$\alpha$},
		xmin=0, xmax=1,
		ymin=0, ymax=1,
		tick align=outside,
		xmajorgrids,
		x grid style={white},
		ymajorgrids,
		y grid style={white},
		axis line style={white},
		axis background/.style={fill=color0}
		]
		\addplot graphics [includegraphics cmd=\pgfimage,xmin=0, xmax=1, ymin=0, ymax=1] {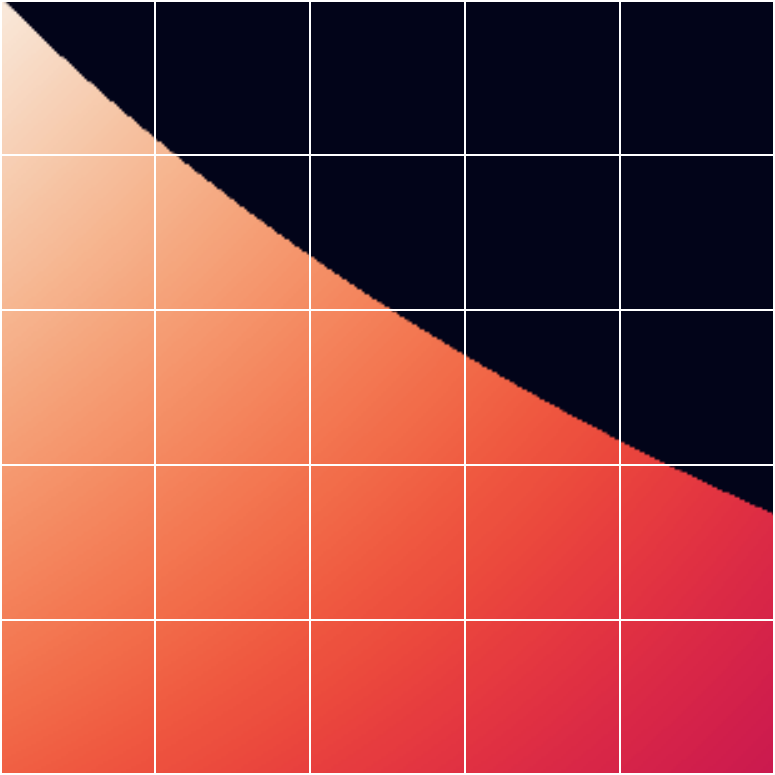};
		\end{axis}
		\end{tikzpicture}
        \end{subfigure}
        \caption{The upper bound for admissible values for $\la L$ as a function of $\a$ and $\b$ according to \eqref{ineq:a} and \eqref{ineq:b}.
        	    Black regions denote infeasible combinations. Lighter colors indicate a higher admissible value.}
        \label{fig:param}
\end{figure}

\begin{proof}
Let $L'$ denote the Lipschitz constant of $B':=B-\frac{\a}{\la}I$. By combining Lemma~\ref{lem:B lip and coco} with the assumptions in each of the respective cases, we obtain
\begin{equation}\label{eq:eps}
  \e:=\min\left\{ \frac{2-\b(1+\a)}{2},\frac{1-\a}{\b}\right\}-\la L'>0.
\end{equation}
Then Lemma~\ref{eq:inertia_scheme} together with \eqref{eq:eps} gives
\begin{multline*}
(1-\a )\n{x_{k+1}-x}^2 +2\la\lr{B'(x_{k+1})-B'(x_k),x-x_{k+1}} + b_{k+1} \\
\leq (1-\a )\|x_k-x\|^2 + 2\la\lr{B'(x_k)-B'(x_{k-1}),x-x_k}+  b_k\\
+\frac{\la L'}{\b}\n{x_k-x_{k-1}}^2 -\left(\frac{\la L'}{\b}+\e \right) \n{x_{k+1}-x_k}^2,
\end{multline*}
which telescopes to yield
\begin{multline}\label{eq:telescopedmf!}
(1-\a )\n{x_{k+1}-x}^2 +2\la\lr{B'(x_{k+1})-B'(x_k),x-x_{k+1}} + b_{k+1} \\
\leq (1-\a )\|x_0-x\|^2 + 2\la\lr{B'(x_0)-B'(x_{-1}),x-x_0}+  b_0\\
+\frac{\la L'}{\b}\n{x_0-x_{-1}}^2 -\frac{\la L'}{\b}\n{x_{k+1}-x_k}^2 - \e \sum_{i=1}^k\n{x_{i+1}-x_i}^2.
\end{multline}
The $L'$-Lipschitz continuity of $B'$ together with \eqref{eq:eps} gives
\begin{align*}
2\la\lr{B'(x_{k+1})-B'(x_k),x-x_{k+1}}
  &\geq -2\la L'\n{x_{k+1}-x_k}\n{x_{k+1}-x} \\
  &\geq -\frac{\la L'}{\beta}\n{x_{k+1}-x_k}^2 - \beta \la L'\n{x_{k+1}-x}^2 \\
  &\geq -\frac{\la L'}{\beta}\n{x_{k+1}-x_k}^2 - (1-\alpha-\b\e)\n{x_{k+1}-x}^2.
\end{align*}
This, together with \eqref{eq:telescopedmf!} and the fact that $b_{k+1}\geq 0$, yields the inequality
\begin{multline*}
\b\e\n{x_{k+1}-x}^2 + \e \sum_{i=1}^k\n{x_{i+1}-x_i}^2 \\
\leq (1-\a )\|x_0-x\|^2 + 2\la\lr{B'(x_0)-B'(x_{-1}),x-x_0}+  b_0
+\frac{\la L'}{\b}\n{x_0-x_{-1}}^2,
\end{multline*}
which shows that $(x_k)$ is bounded and $\n{x_k-x_{k+1}}\to0$. The remainder of the proof follows a similar argument to Theorem~\ref{th:main}.
\end{proof}
The admissible values of $\a$ and $\b$ for the two cases in Theorem~\ref{th:relaxed inertial} are shown in Figure~\ref{fig:param}.
By setting $\beta=1$ in Theorem~\ref{th:relaxed inertial}, we obtain the following inertial algorithm.
\begin{corollary}\label{cor:inertial}
	Let $A\colon \Hilbert\setto\Hilbert$ be maximally monotone and let
	$B\colon \Hilbert\to\Hilbert$ be monotone with $(A+B)^{-1}(0)\neq\varnothing$.
	Suppose $\a\in[0,1/3)$, $\la>0$, and either
	\begin{enumerate}[label=(\alph*)]
		\item $B$ is $L$-Lipschitz and
		$\la < \frac{1-3\a}{2L}$, or
		\item $B$ is $(1/L)$-cocoercive and $ \la <  \frac{1+\a}{2L}$.
	\end{enumerate}
	Given $x_0,x_{-1}\in\Hilbert$, define the sequence $(x_k)$ according to
	\begin{equation}\label{eq:inertial iteration}
	x_{k+1} := J_{\la A}\bigl( x_k - 2\la B(x_k) +\la B(x_{k-1}) + \a(x_k-x_{k-1}) \bigr) \\
	\end{equation}
	Then $(x_k)$ converges weakly to a point contained in $(A+B)^{-1}(0)$.
\end{corollary}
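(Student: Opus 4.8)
The plan is to obtain Corollary~\ref{cor:inertial} directly from Theorem~\ref{th:relaxed inertial} by specializing the relaxation parameter to $\b=1$. First I would substitute $\b=1$ into the scheme \eqref{eq:inertia_scheme_first}: the second line becomes $x_{k+1}=(1-1)x_k+1\cdot z_{k+1}=z_{k+1}$, so the relaxation step is trivial and the two-step recursion collapses to the single update $x_{k+1}=J_{\la A}(x_k-\la B(x_k)-\la(B(x_k)-B(x_{k-1}))+\a(x_k-x_{k-1}))$. Combining the two copies of $B(x_k)$ yields exactly $x_{k+1}=J_{\la A}(x_k-2\la B(x_k)+\la B(x_{k-1})+\a(x_k-x_{k-1}))$, which is the iteration \eqref{eq:inertial iteration} appearing in the corollary.

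Next I would check that the admissible parameter ranges of Theorem~\ref{th:relaxed inertial} reduce to those claimed. In the Lipschitz case~\ref{it:a}, setting $\b=1$ in \eqref{ineq:a} gives the requirement $\la<\min\{\frac{1-3\a}{2L},\frac{1-2\a}{L}\}$; since $1-3\a<2-4\a$ for every $\a<1$, the first argument always attains the minimum, so the bound simplifies to $\la<\frac{1-3\a}{2L}$, and positivity forces $\a<1/3$. In the cocoercive case~\ref{it:b}, the side condition $\a<\frac{2-\b}{2+\b}$ becomes $\a<1/3$ at $\b=1$, while \eqref{ineq:b} reduces to $\la<\min\{\frac{1+\a}{2L},\frac{1}{L}\}$; because $1+\a<2$, the first term dominates and the bound simplifies to $\la<\frac{1+\a}{2L}$. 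Both reductions agree precisely with the hypotheses of Corollary~\ref{cor:inertial}.

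Having matched the iteration and every parameter constraint, the weak convergence of $(x_k)$ to a point in $(A+B)^{-1}(0)$ follows immediately by invoking Theorem~\ref{th:relaxed inertial}. I expect no genuine obstacle: the argument is pure bookkeeping around the substitution $\b=1$, and the only mildly delicate step is identifying which argument realizes each minimum in \eqref{ineq:a} and \eqref{ineq:b}. These amount to the elementary comparisons $1-3\a<2-4\a$ and $1+\a<2$, both valid throughout the relevant range $\a\in[0,1/3)$, so nothing substantive remains to be proved.
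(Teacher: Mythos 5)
Your proposal is correct and is exactly the paper's own argument: the corollary is stated as the specialization $\b=1$ of Theorem~\ref{th:relaxed inertial}, and your substitutions into \eqref{eq:inertia_scheme_first}, \eqref{ineq:a} and \eqref{ineq:b} (including identifying which term attains each minimum, via $1-3\a<2-4\a$ and $1+\a<2$) carry out the bookkeeping the paper leaves implicit.
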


\begin{remark}
    Although Corollary~\ref{cor:inertial} establishes that inertia
    increases the range of admissible stepsizes when $B$ is
    cocoercive, it has the opposite effect when $B$ is merely
    monotone. A similar phenomena with Tseng's method was observed in
    \cite{boct2016inertial}.
\end{remark}

\begin{remark}
	By setting $B=0$ in Corollary~\ref{cor:inertial}, the
	scheme~\eqref{eq:inertial iteration} reduces to the classical
	\emph{inertial proximal algorithm} first considered in
	\cite{attouch01}. It is interesting to note that the proof presented here
	does not follow the technique from \cite{attouch01}, which is used in the analysis of most other first order inertial operator splitting methods
	\cite{pock:inertial,moudafi03,boct2016inertial}.
\end{remark}

\section{Three operator splitting}\label{s:3op}
In this section, we consider a structured three operator monotone inclusion. Specifically, we consider the inclusion
\begin{equation}\label{eq:3 operator}
\text{find}~x\in\Hilbert\text{ such that }0\in (A+B +C)(x),
\end{equation}
where $A\colon \Hilbert\setto\Hilbert$ is maximal monotone, $B\colon \Hilbert\to\Hilbert$ is monotone and $L_1$-Lipschitz, and $C\colon \Hilbert\to\Hilbert$ is $1/L_2$-cocoercive. This problem could be solved using the two operator splitting algorithm in Section~\ref{s:FoRB splitting} applied to $A$ and $(B+C)$, where we note that $(B+C)$ is $L$-Lipschitz continuous with $L=L_1+L_2$. Consequently, to apply Theorem~\ref{th:main}, the stepsize $\la$ should satisfy
 $$ \la < \frac{1}{2L} = \frac{1}{2L_1+2L_2}. $$
In this section, we show that this can be improved by exploiting the additional structure in \eqref{eq:3 operator}. Indeed, we propose a modification which only requires $\la>0$ to satisfy
  \begin{equation*}
  \la < \frac{2}{4L_1+L_2}= \frac{1}{2L_1+\frac{1}{2}L_2}.
  \end{equation*}
Given initial points $x_0,x_{-1}\in\Hilbert$, our modified scheme is given by
\begin{equation}\label{eq:3op iteration}
x_{k+1} = J_{\la A}\bigl(x_k - 2\la B(x_k) + \la B(x_{k-1}) - \la C(x_k) \bigr) \quad\forall k\in\mathbb{N}.
\end{equation}
In other words, the algorithm only uses a standard forward step of the operator $C$, as is employed in the forward-backward method~\eqref{eq:forward-backward}. For an algorithm for the case when $C$ is Lipschitz but not necessarily cocoercive, see \cite{ryu2019finding}.

We begin our analysis with the three operator analogue of Lemma~\ref{lem:decreasing sequence}.
\begin{lemma}\label{lem:3op decreasing sequence}
Let $x\in(A+B+C)^{-1}(0)$ and let the sequence $(x_k)$ be given by \eqref{eq:3op iteration}. Suppose $\la \in \left(0,\frac{2}{4L_1+L_2} \right) $. Then there exists an $\e>0$ such that, for all $k\in\mathbb{N}$, we have
	\begin{multline*}
	\n{x_{k+1}-x}^2 + 2\la\lr{B(x_{k+1})-B(x_k),x-x_{k+1}} + \left(\la L_1+\e \right)\n{x_{k+1}-x_k}^2\\ \leq \n{x_k-x}^2 + 2\la\lr{B(x_k)-B(x_{k-1}),x-x_{k}} + \la L_1\n{x_k-x_{k-1}}^2.
      \end{multline*}
\end{lemma}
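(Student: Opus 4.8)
The plan is to follow the proof of Lemma~\ref{lem:decreasing sequence} closely, applying Proposition~\ref{prop:abs} with $F:=\la A$, $d_1:=x_k$, $d_2:=x_{k+1}$, and the single-valued data
\[
u := \la B(x)+\la C(x),\quad u_1:=\la B(x_k)+\la C(x_k),\quad v_1:=\la B(x_k),\quad u_0:=\la B(x_{k-1}),\quad v_2:=\la B(x_{k+1}).
\]
The point is that the cocoercive value $C(x_k)$ is folded into the ``current forward'' term $u_1$ (not into the reflection slots $v_1,u_0,v_2$, which carry only $B$), so that $d_1-u_1-(v_1-u_0)=x_k-2\la B(x_k)+\la B(x_{k-1})-\la C(x_k)$ and hence $d_2=x_{k+1}$ as required. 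Since $x\in(A+B+C)^{-1}(0)$ gives $-\la B(x)-\la C(x)\in\la A(x)=F(x)$, the choice $u\in-F(x)$ is admissible.

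With these substitutions, the term $2\lr{v_2-u_1,x-d_2}$ on the left of \eqref{abs_ineq} produces the desired inner product $2\la\lr{B(x_{k+1})-B(x_k),x-x_{k+1}}$ together with a leftover $-2\la\lr{C(x_k),x-x_{k+1}}$; similarly the final term $-2\lr{v_2-u,d_2-x}$ contributes $-2\la\lr{B(x_{k+1})-B(x),x_{k+1}-x}+2\la\lr{C(x),x_{k+1}-x}$. The first of these is nonnegative by monotonicity of $B$ and is discarded. First I would collect all the $C$-terms, which combine into the single expression $2\la\lr{C(x)-C(x_k),\,x_{k+1}-x}$. As in the cited lemma, the reflection cross term $2\la\lr{B(x_k)-B(x_{k-1}),x_k-x_{k+1}}$ is bounded via $L_1$-Lipschitzness of $B$ by $\la L_1\bigl(\n{x_k-x_{k-1}}^2+\n{x_{k+1}-x_k}^2\bigr)$.

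The main obstacle, and the only genuinely new step, is controlling the cocoercive contribution sharply enough to recover the stated stepsize threshold. I would split $x_{k+1}-x=(x_{k+1}-x_k)+(x_k-x)$, giving
\[
2\la\lr{C(x)-C(x_k),\,x_{k+1}-x} = -2\la\lr{C(x_k)-C(x),\,x_{k+1}-x_k} - 2\la\lr{C(x_k)-C(x),\,x_k-x}.
\]
On the second summand I would invoke $1/L_2$-cocoercivity of $C$ to obtain $-2\la\lr{C(x_k)-C(x),x_k-x}\leq-\tfrac{2\la}{L_2}\n{C(x_k)-C(x)}^2$; on the first I would apply Cauchy--Schwarz followed by Young's inequality with the weight tuned so that the resulting $\tfrac{2\la}{L_2}\n{C(x_k)-C(x)}^2$ cancels exactly the cocoercivity gain, leaving precisely $\tfrac{\la L_2}{2}\n{x_{k+1}-x_k}^2$. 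This cancellation is what yields the coefficient $L_2/2$ rather than $L_2$, and hence the improved bound.

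Substituting both estimates back and also discarding the term $-\n{x_k-x_{k+1}}^2$ from \eqref{abs_ineq} against the quadratic terms, the coefficient of $\n{x_{k+1}-x_k}^2$ on the left becomes $1-\la L_1-\tfrac{\la L_2}{2}$, while the right retains $\la L_1\n{x_k-x_{k-1}}^2$. It then suffices to set $\e:=1-2\la L_1-\tfrac{\la L_2}{2}$, which is strictly positive precisely when $\la<\frac{1}{2L_1+L_2/2}=\frac{2}{4L_1+L_2}$, and to note that $\la L_1+\e=1-\la L_1-\tfrac{\la L_2}{2}$, matching the claimed inequality.
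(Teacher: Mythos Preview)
Your proposal is correct and follows essentially the same route as the paper. The paper works directly from monotonicity of $A$ rather than invoking Proposition~\ref{prop:abs}, but since that proposition is nothing more than a packaged monotonicity computation, the two derivations are term-for-term identical: the same split of the $C$-contribution into $\lr{C(x_k)-C(x),x_k-x}$ and $\lr{C(x_k)-C(x),x_k-x_{k+1}}$, the same Young weight yielding $\tfrac{\la L_2}{2}\n{x_{k+1}-x_k}^2$ after cancellation against cocoercivity, and the same $\e=1-2\la L_1-\tfrac{\la L_2}{2}$. One verbal slip: the term $-2\la\lr{B(x_{k+1})-B(x),x_{k+1}-x}$ is non\emph{positive} (the inner product is nonnegative by monotonicity), which is precisely why it may be dropped from the right-hand side; your conclusion is right but the sign description is inverted.
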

\begin{proof}
	Since $0\in(A+B+C)(x)$, we have $-(B+C)(x)\in A(x)$. Combined with the monotonicity of $A$, this gives
	$$ 0\leq \lr{x_{k+1}-x_k + \la (B+C)(x_k) +  \la(B(x_k)-B(x_{k-1}))-\la (B+C)(x),x-x_{k+1}}, $$
	which we rewrite as
	\begin{multline}\label{eq:monotonicty inequality 3op}
	0\leq \lr{x_{k+1}-x_k,x-x_{k+1}} + \la\lr{B(x_k)-B(x),x-x_{k+1}} \\
	 +\la\lr{B(x_k)-B(x_{k-1}),x-x_{k}} +\la\lr{B(x_k)-B(x_{k-1}),x_k-x_{k+1}} \\
	 + \la\lr{C(x_k)-C(x),x-x_{k}} + \la\lr{C(x_k)-C(x),x_k-x_{k+1}}.
	\end{multline}
	The first through fourth terms can be estimated as in Lemma~\ref{lem:decreasing sequence}. Using $1/L_2$-cocoercivity of $C$, the fifth term can be estimated as
    $$ \lr{C(x_k)-C(x),x-x_{k}} \leq -\frac{1}{L_2}\n{C(x_k)-C(x)}^2,$$
    and the final term can estimated as
    \begin{align*}
     \lr{C(x_k)-C(x),x_k-x_{k+1}}
	 &\leq \n{C(x_k)-C(x)}\n{x_{k+1}-x_k} \\
	 &\leq  \frac{1}{L_2}\n{C(x_k)-C(x)}^2 + \frac{L_2}{4}\n{x_{k+1}-x_k}^2.
	\end{align*}
	Thus, altogether, \eqref{eq:monotonicty inequality 3op} implies that
	\begin{multline*}
	0\leq \n{x_k-x}^2 - \n{x_{k+1}-x_k}^2-\n{x_{k+1}-x}^2 - 2\la\lr{B(x_{k+1})-B(x_k),x-x_{k+1}}\\
	+ 2\la\lr{B(x_k)-B(x_{k-1}),x-x_{k}} + \la L_1\left(\n{x_k-x_{k-1}}^2+\n{x_{k+1}-x_k}^2\right) \\
	- \frac{2\la}{L_2}\n{C(x_k)-C(x)}^2 + \la \left(\frac{2}{L_2}\n{C(x_k)-C(x)}^2 + \frac{L_2}{2}\n{x_{k+1}-x_k}^2\right),
	\end{multline*}
	which, on rearranging, gives
	\begin{multline*}
	\n{x_{k+1}-x}^2 + 2\la\lr{B(x_{k+1})-B(x_k),x-x_{k+1}} + \left(1-\lambda L_1-\frac{\la L_2}{2}\right)\n{x_{k+1}-x_k}^2\\ \leq \n{x_k-x}^2 + 2\la\lr{B(x_k)-B(x_{k-1}),x-x_{k}} + \la L_1\n{x_k-x_{k-1}}^2.
	\end{multline*}
    The claimed inequality follows with
     $ \e := \left(1-\lambda L_1-\frac{\la L_2}{2}\right)-\la L_1 = 1-2\la L_1-\frac{\la L_2}{2}>0$.
\end{proof}

The following theorem is our main result regarding convergence of the three operator splitting scheme.
\begin{theorem}\label{th:3op}
	Let $A\colon \Hilbert\setto\Hilbert$ be maximally monotone, let
        $B\colon \Hilbert\to\Hilbert$ be monotone and $L_1$-Lipschitz, and
        let $C\colon \Hilbert\to\Hilbert$ be
        $1/L_2$-cocoercive. Suppose that
        $(A+B+C)^{-1}(0)\neq\varnothing$ and $\la
        \in\left(0,\frac{2}{4L_1+L_2}\right)$. Given $x_0,x_{-1}\in\Hilbert$,
        define the sequence $(x_k)$ according to
	\begin{equation*}
	x_{k+1} = J_{\la A}\bigl(x_k - 2\la B(x_k) + \la B(x_{k-1}) -\la C(x_k)  \bigr) \quad\forall k\in\mathbb{N}.
	\end{equation*}
	Then $(x_k)$ converges weakly to a point contained in $(A+B+C)^{-1}(0)$.
\end{theorem}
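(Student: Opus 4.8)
The plan is to follow the template of the proof of Theorem~\ref{th:main}, substituting Lemma~\ref{lem:3op decreasing sequence} for Lemma~\ref{lem:decreasing sequence}. First I would fix $x\in(A+B+C)^{-1}(0)$ and observe that the inequality of Lemma~\ref{lem:3op decreasing sequence} has the form $D_{k+1}+\e\n{x_{k+1}-x_k}^2\leq D_k$, where
\[
D_k := \n{x_k-x}^2 + 2\la\lr{B(x_k)-B(x_{k-1}),x-x_k} + \la L_1\n{x_k-x_{k-1}}^2 .
\]
Telescoping over $i=0,\dots,k$ then yields $D_{k+1}+\e\sum_{i=0}^{k}\n{x_{i+1}-x_i}^2\leq D_0$, so the running sum of squared increments is controlled by the boundary data at $x_0,x_{-1}$.

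Next I would control the cross term inside $D_{k+1}$ using the $L_1$-Lipschitz continuity of $B$: by Cauchy--Schwarz and $2ab\leq a^2+b^2$,
\[
2\la\lr{B(x_{k+1})-B(x_k),x-x_{k+1}} \geq -\la L_1\bigl(\n{x_{k+1}-x_k}^2+\n{x_{k+1}-x}^2\bigr).
\]
Substituting this into $D_{k+1}$ cancels the $\la L_1\n{x_{k+1}-x_k}^2$ term and leaves the coefficient $(1-\la L_1)$ on $\n{x_{k+1}-x}^2$. Since $\la<\frac{2}{4L_1+L_2}$ forces $\la L_1<\tfrac12$, this coefficient is strictly positive, so $(x_k)$ is bounded and $\sum\n{x_{i+1}-x_i}^2<\infty$, hence $\n{x_k-x_{k+1}}\to0$.

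For the asymptotic step I would rewrite \eqref{eq:3op iteration} via the definition of the resolvent as the inclusion
\[
\tfrac{1}{\la}(x_k-x_{k+1}) + \bigl(B(x_{k+1})-B(x_k)\bigr) + \bigl(B(x_{k-1})-B(x_k)\bigr) + \bigl(C(x_{k+1})-C(x_k)\bigr) \in (A+B+C)(x_{k+1}).
\]
Every term on the left converges strongly to zero: the first because $\n{x_k-x_{k+1}}\to0$, the two middle ones because $B$ is $L_1$-Lipschitz, and the last because $1/L_2$-cocoercivity makes $C$ necessarily $L_2$-Lipschitz. Since $A+B+C$ is maximally monotone (here $B,C$ are continuous with full domain), its graph is demiclosed, so taking the weak limit along any subsequence converging to a cluster point $\overline{x}$ gives $0\in(A+B+C)(\overline{x})$.

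Finally, to pass from subsequential to full weak convergence I would invoke Lemma~\ref{lem:weakly convergent sequences}. Applying the inequality of Lemma~\ref{lem:3op decreasing sequence} with $x=\overline{x}$ shows that $(D_k)$ is nonincreasing and bounded below, hence convergent; because $\n{x_k-x_{k-1}}\to0$ and $B$ is Lipschitz, the two auxiliary terms in $D_k$ vanish, so $\lim_k\n{x_k-\overline{x}}^2$ exists. As $\overline{x}$ was an arbitrary weak cluster point, Lemma~\ref{lem:weakly convergent sequences} gives weak convergence of $(x_k)$ to a zero of $A+B+C$. I expect the only genuinely delicate point to be the demiclosedness argument, namely confirming that all three forward increments (including the cocoercive one) tend strongly to zero so that the residual lies in the limiting graph; the remaining estimates are a routine transcription of the two-operator analysis, the one arithmetic subtlety being the verification that $\la L_1<\tfrac12$ under the stepsize bound $\la<\frac{2}{4L_1+L_2}$.
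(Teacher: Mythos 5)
Your proposal is correct and takes essentially the same route as the paper: the paper's proof likewise transplants the argument of Theorem~\ref{th:main} with Lemma~\ref{lem:3op decreasing sequence} in place of Lemma~\ref{lem:decreasing sequence}, and the only additional point it checks is precisely your lower-bound estimate $2\la\lr{B(x_{k+1})-B(x_k),x-x_{k+1}} \geq -\la L_1\bigl(\n{x_{k+1}-x_k}^2+\n{x_{k+1}-x}^2\bigr)$ together with $\la L_1 < 1$ under the stepsize restriction. Your explicit verification of the demiclosedness step (including that cocoercivity makes $C$ an $L_2$-Lipschitz operator, so all forward increments vanish strongly) simply spells out what the paper leaves implicit.
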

\begin{proof}
	The proof is more or less the same as Theorem~\ref{th:main} but uses Lemma~\ref{lem:3op decreasing sequence} in place of Lemma~\ref{lem:decreasing sequence}. The only other thing to check is that
	\begin{align*}
	\n{x_{k+1}-x}^2 + 2\la\lr{B(x_{k+1})-B(x_k),x-x_{k+1}} + \la L_1\n{x_{k+1}-x_k}^2
	\end{align*}
	is bounded from below by zero. To see this, observe that
	\begin{align*}
	2\la\lr{B(x_{k+1})-B(x_k),x-x_{k+1}}
	&\geq -2\la L_1 \n{x_{k+1}-x_k}\n{x_{k+1}-x} \\
	&\geq -\la L_1\left( \n{x_{k+1}-x_k}^2+ \n{x_{k+1}-x}^2 \right),
	\end{align*}
	and $1-\la L_1>0$ since $ \la < \frac{2}{4L_1+L_2} < \frac{1}{L_1}$.
\end{proof}

\section{Between forward-backward and forward-reflected-backward}\label{s:between}

In this section, we consider a variant of the
forward-reflected-backward method for a structured version of the
monotone inclusion \eqref{eq:zero 2sum} in a separable Hilbert space
$\Hilbert$. Precisely, we assume that the second operator
$B\colon\Hilbert\to\Hilbert$ is monotone and decomposable in the form
\begin{equation}
\label{eq:B decompose}B=\frac{1}{n}\sum_{i=1}^nB_i
\end{equation}
where $B_i\colon\Hilbert\to\Hilbert$ is $L$-Lipschitz continuous for $i=1,\dots,n$.
In what follows, we analyze the following iteration
\begin{equation}\label{eq:the iteration stoch}
\left\{\begin{array}{l}
\text{Choose~}i_k\text{~uniformly at random from~}\{1,\dots,n\} \\
x_{k+1} = J_{\la A}\bigl( x_k-\la B(x_k) - \la( B_{i_k}(x_k) - B_{i_k}(x_{k-1}) \bigr) \\
\end{array}\right.\quad\forall k\in\mathbb{N}.
\end{equation}
In other words, the term $B(x_k)-B(x_{k-1})$ inside the resolvent is
replaced by $B_{i_k}(x_{k}) - B_{i_k}(x_{k-1})$. Although it is unclear
if \eqref{eq:the iteration stoch} has any practical value as it
still requires one full evaluation of $B$ in every iteration, it is
surprising that such a small random perturbation still ensures its
(almost sure) convergence without cocoercivity. Indeed, without this
correction, the algorithm reduces to the forward-backward method
which, in general, need not converge in this setting. The fact that
\eqref{eq:the iteration stoch} converges suggests that the exact form
of the correction to values of $B$ may not be important.

In what follows, given a random variable $X$, $\mathbb{E}[X]$ denotes its expectation and $\mathbb{E}_k[X]$ denotes its conditional expectation with respect to the $\sigma$-algebra generated by the random variables $x_1,x_2,\dots,x_k$.

\begin{lemma}\label{lem:decreasing sequence stoch}
	Let $x\in(A+B)^{-1}(0)$ and let $(x_k)$ be given by \eqref{eq:the iteration stoch}.
	Suppose $\la \in \left(0,\frac{1}{2L}\right)$.
	Then there exists an $\e>0$ such that, for all $k\in\mathbb{N}$, we have
	\begin{multline}
	\ \|x_{k+1}-x\|^2+2\la\lr{B(x_{k+1})-B(x_k),x-x_{k+1}} + \left(\frac{1}{2}+\e\right)\|x_{k+1}-x_k\|^2\\
	\leq \|x_k-x\|^2  + 2\la\lr{B_{i_k}(x_k)-B_{i_k}(x_{k-1}), x-x_{k}}  +\frac{1}{2}\|x_{k}-x_{k-1}\|^2.
	\end{multline}
\end{lemma}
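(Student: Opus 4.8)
The plan is to follow the proof of Lemma~\ref{lem:decreasing sequence} as closely as possible, substituting the stochastic index $i_k$ in the appropriate place and being careful to isolate the correction term that is being perturbed. The starting point is to apply Proposition~\ref{prop:abs} with essentially the same substitutions as in Lemma~\ref{lem:decreasing sequence}, except that the ``previous correction'' quantities now carry the random index. Concretely, I would set $F:=\la A$, $d_1:=x_k$, $d_2:=x_{k+1}$, $u:=\la B(x)$, $u_1:=\la B(x_k)$, $v_2:=\la B(x_{k+1})$, while choosing $u_0:=\la B_{i_k}(x_{k-1})$ and $v_1:=\la B_{i_k}(x_k)$. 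The key point is that the iteration \eqref{eq:the iteration stoch} is exactly \eqref{abstr_seq} with these choices, since $d_1-u_1-(v_1-u_0)=x_k-\la B(x_k)-\la(B_{i_k}(x_k)-B_{i_k}(x_{k-1}))$, matching the argument of the resolvent. This gives the analogue of the intermediate inequality in Lemma~\ref{lem:decreasing sequence}, with the term $\la_{k-1}\lr{B(x_k)-B(x_{k-1}),\,\cdot\,}$ everywhere replaced by $\la\lr{B_{i_k}(x_k)-B_{i_k}(x_{k-1}),\,\cdot\,}$.

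Next I would discharge the two ``error'' terms exactly as before. The monotonicity of $B$ makes the term $-2\la\lr{B(x_{k+1})-B(x),x_{k+1}-x}$ nonnegative, so it is dropped. For the cross term $2\la\lr{B_{i_k}(x_k)-B_{i_k}(x_{k-1}),\,x_k-x_{k+1}}$, I would invoke the $L$-Lipschitz continuity of the individual operator $B_{i_k}$ — here it is crucial that each $B_i$ is assumed $L$-Lipschitz, not merely their average — together with the Cauchy--Schwarz and Young inequalities, giving
\begin{equation*}
\lr{B_{i_k}(x_k)-B_{i_k}(x_{k-1}),\,x_k-x_{k+1}}\leq \frac{L}{2}\left(\n{x_k-x_{k-1}}^2+\n{x_k-x_{k+1}}^2\right).
\end{equation*}
Substituting this back produces the coefficient $(1-\la L)$ on $\n{x_{k+1}-x_k}^2$ on the left and $\la L$ on $\n{x_k-x_{k-1}}^2$ on the right. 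Since $\la<\frac{1}{2L}$ forces $\la L<\frac12$, I can set $\e:=\frac12-\la L>0$ and use $1-\la L\geq \frac12+\e$ to arrive at the claimed inequality.

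I expect no serious obstacle here, since this lemma is a pathwise (deterministic, per-realization) estimate — the randomness in $i_k$ enters only as a fixed but arbitrary index in the Lipschitz bound, and the inequality holds for \emph{every} outcome of the draw. The expectation operators $\mathbb{E}_k$ and the genuinely probabilistic arguments (almost sure convergence, supermartingale/quasi-Fej\'er reasoning) are not needed at this stage and will presumably appear in the subsequent theorem that consumes this lemma. The one point requiring mild care is bookkeeping: in the constant-stepsize stochastic scheme \eqref{eq:the iteration stoch} the correction on the right-hand side is $B_{i_k}(x_k)-B_{i_k}(x_{k-1})$ with the \emph{current} random index $i_k$, so the telescoping structure that will be exploited later is no longer a clean cancellation between consecutive deterministic terms; however, establishing the single-step inequality itself is unaffected by this, and the proof is, as the authors note elsewhere, ``exactly the same'' as Lemma~\ref{lem:decreasing sequence} up to this substitution.
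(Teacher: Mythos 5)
Your proposal is correct and matches the paper's own proof essentially verbatim: the same substitutions into Proposition~\ref{prop:abs} (with $u_0:=\la B_{i_k}(x_{k-1})$, $v_1:=\la B_{i_k}(x_k)$), dropping the monotonicity term, bounding the cross term via $L$-Lipschitzness of $B_{i_k}$, and concluding with $\e:=\frac{1}{2}-\la L>0$. Your observation that the estimate is pathwise (deterministic per realization) and that the probabilistic machinery only enters in the subsequent theorem is also exactly how the paper proceeds.
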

\begin{proof}
	By applying Proposition~\ref{prop:abs} with
	\begin{equation*}
	\begin{split}
	F&:=\la A\quad\\
	u&:=\la B(x) \quad
	\end{split}
	\quad
	\begin{split}
	d_1&:=x_k \\
	d_{2}&:=x_{k+1}
	\end{split}
	\quad
	\begin{split}
	u_{0}& :=\la B_{i_k}(x_{k-1}) \quad \\ u_1&:=\la B(x_k) \quad
	\end{split}
	\quad
	\begin{split}
	v_1 &:=\la B_{i_k}(x_k) \\ v_{2} &:= \la B(x_{k+1}) ,
	\end{split}
	\end{equation*}
	we obtain the inequality
	\begin{multline*}
	\n{x_{k+1}-x}^2 + 2\la\lr{B(x_{k+1})-B(x_k),x-x_{k+1}} + \n{x_{k+1}-x_k}^2\\
	\leq \n{x_k-x}^2 + 2\la\lr{B_{i_k}(x_k)-B_{i_k}(x_{k-1}),x-x_{k}} \\
	+ 2\la\lr{B_{i_k}(x_k)-B_{i_k}(x_{k-1}),x_k-x_{k+1}} - 2\la\lr{B(x_{k+1})-B(x),x_{k+1}-x}.
	\end{multline*}
	Since $B$ is monotone, the last term is nonnegative. Using Lipschitzness of $B_{i_k}$, the second last term can be estimated as
	\begin{equation*}
	\begin{split}
	\lr{B_{i_k}(x_k)-B_{i_k}(x_{k-1}),x_k-x_{k+1}}
	& \leq  L\n{x_k-x_{k-1}}\n{x_k-x_{k+1}} \\
	& \leq \frac{L}{2}\left(\n{x_k-x_{k-1}}^2+\n{x_k-x_{k+1}}^2\right).
	\end{split}
	\end{equation*}
	Thus, altogether, we obtain
	\begin{multline*}
	\n{x_{k+1}-x}^2 + 2\la\lr{B(x_{k+1})-B(x_k),x-x_{k+1}} + (1-\la L)\n{x_{k+1}-x_k}^2\\ \leq \n{x_k-x}^2 + 2\la\lr{B_{i_k}(x_k)-B_{i_k}(x_{k-1}),x-x_{k}} + \la L\n{x_k-x_{k-1}}^2.
	\end{multline*}
	The claimed inequality follows since $\la L<\frac{1}{2}$ and $1-\la L < \frac{1}{2}$.
\end{proof}

\begin{theorem}\label{th:main stoch}
	Suppose $\Hilbert$ is separable.
	Let $A\colon \Hilbert\setto\Hilbert$ be maximally monotone, and let $B\colon \Hilbert\to\Hilbert$ be monotone with $B=\sum_{i=1}^nB_i$ for $L$-Lipschitz continuous operators $B_i\colon\Hilbert\to\Hilbert$. Suppose that $(A+B)^{-1}(0)\neq\varnothing$ and that $\la \in \left(0,\frac{1}{2L}\right)$.
	Given $x_0,x_{-1}\in\Hilbert$, define the sequence $(x_k)$ according to \eqref{eq:the iteration stoch}.
	Then $(x_k)$ converges weakly almost surely to a point contained in $(A+B)^{-1}(0)$.
\end{theorem}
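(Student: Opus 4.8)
The plan is to follow the proof of Theorem~\ref{th:main} closely, replacing the deterministic quasi-Fej\'er analysis by its stochastic counterpart. Let $Z := (A+B)^{-1}(0)$ and fix $x\in Z$. Writing $\mathcal{F}_k$ for the $\sigma$-algebra generated by $x_1,\dots,x_k$, I would first note that $i_k$ is drawn uniformly and independently of $\mathcal{F}_k$, so that, since $x_k,x_{k-1}$ are $\mathcal{F}_k$-measurable and $B=\frac1n\sum_{i=1}^n B_i$,
\[
\mathbb{E}_k\bigl[B_{i_k}(x_k)-B_{i_k}(x_{k-1})\bigr] = \frac1n\sum_{i=1}^n\bigl(B_i(x_k)-B_i(x_{k-1})\bigr) = B(x_k)-B(x_{k-1}).
\]
Taking $\mathbb{E}_k$ on both sides of the inequality in Lemma~\ref{lem:decreasing sequence stoch} turns the random cross term on its right-hand side into $2\la\lr{B(x_k)-B(x_{k-1}),x-x_k}$. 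Upon setting
\[
\psi_k(x) := \|x_k-x\|^2 + 2\la\lr{B(x_k)-B(x_{k-1}),x-x_k} + \tfrac12\|x_k-x_{k-1}\|^2,
\]
this yields the supermartingale-type estimate $\mathbb{E}_k[\psi_{k+1}(x)] + \e\,\mathbb{E}_k\|x_{k+1}-x_k\|^2 \le \psi_k(x)$.

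Next I would record that $\la L<\tfrac12$ and Lipschitzness of $B$ give, exactly as in \eqref{eq:lipschitzness of B for th:main}, the lower bound $\psi_k(x) \ge (1-\la L)\|x_k-x\|^2 + (\tfrac12-\la L)\|x_k-x_{k-1}\|^2 \ge 0$. Hence $(\psi_k(x))$ is a nonnegative supermartingale and converges almost surely; in particular $(x_k)$ is almost surely bounded. Telescoping the full-expectation version of the estimate gives $\e\sum_k\mathbb{E}\|x_{k+1}-x_k\|^2 \le \mathbb{E}[\psi_1(x)]<\infty$, so by monotone convergence $\sum_k\|x_{k+1}-x_k\|^2<\infty$ almost surely and $\|x_{k+1}-x_k\|\to0$ almost surely.

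I would then carry out the cluster-point analysis pathwise on the almost-sure event where $(x_k)$ is bounded and $\|x_{k+1}-x_k\|\to0$. Rewriting \eqref{eq:the iteration stoch} in the inclusion form
\[
\tfrac1\la(x_k-x_{k+1}) + B(x_{k+1})-B(x_k) - \bigl(B_{i_k}(x_k)-B_{i_k}(x_{k-1})\bigr)\in (A+B)(x_{k+1}),
\]
the left-hand side tends strongly to $0$, since $\|B(x_{k+1})-B(x_k)\|\le L\|x_{k+1}-x_k\|$ and $\|B_{i_k}(x_k)-B_{i_k}(x_{k-1})\|\le L\|x_k-x_{k-1}\|$ (each $B_i$ being $L$-Lipschitz). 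As $A+B$ is maximally monotone, its graph is demiclosed, so any weak cluster point $\overline{x}$ of $(x_k)$ satisfies $0\in(A+B)(\overline{x})$; that is, almost surely every weak cluster point of $(x_k)$ lies in $Z$.

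The main obstacle is the concluding stochastic Opial step, where the order of quantifiers must be interchanged: the supermartingale argument only provides, for each \emph{fixed} $z\in Z$, an almost-sure event on which $\lim_k\psi_k(z)$—and hence, since $\psi_k(z)-\|x_k-z\|^2\to0$ by $\|x_{k+1}-x_k\|\to0$ and continuity of $B$, also $\lim_k\|x_k-z\|$—exists. To apply Lemma~\ref{lem:weakly convergent sequences} pathwise I instead need a \emph{single} almost-sure event on which $\lim_k\|x_k-z\|$ exists simultaneously for \emph{all} $z\in Z$. Here I would invoke separability of $\Hilbert$: choosing a countable dense subset $D\subseteq Z$ and intersecting the corresponding almost-sure events, the bound $\bigl|\,\|x_k-z\|-\|x_k-z_m\|\,\bigr|\le\|z-z_m\|$ together with boundedness of $(x_k)$ shows that $\lim_k\|x_k-z\|$ exists for every $z\in Z$ on this common event. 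On it, $(x_k)$ is bounded and converges in norm to each of its cluster points, all of which lie in $Z$, so Lemma~\ref{lem:weakly convergent sequences} yields weak convergence; the weak limit is a $Z$-valued random variable, which completes the proof.
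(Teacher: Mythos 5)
Your proposal is correct and follows essentially the same route as the paper: take conditional expectations in Lemma~\ref{lem:decreasing sequence stoch} (using that $i_k$ is independent of $\mathcal{F}_k$, so the $B_{i_k}$ cross term averages to the full-$B$ term), apply supermartingale convergence to get almost sure boundedness and $\|x_{k+1}-x_k\|\to 0$, and conclude via pathwise demiclosedness of $\gra(A+B)$. The only difference is that where the paper invokes an argument analogous to \cite[Proposition~2.3]{combettes2015stochastic} for the final stochastic Opial step, you spell out that argument explicitly (countable dense subset of $(A+B)^{-1}(0)$, intersection of the countably many almost-sure events, and the bound $\bigl|\,\|x_k-z\|-\|x_k-z_m\|\,\bigr|\le\|z-z_m\|$), which is precisely the content of the cited result and correctly identifies where separability of $\Hilbert$ is used.
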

\begin{proof}
	Let $x\in(A+B)^{-1}(0)$ and let $(\varphi_k)\subseteq\mathbb{R}$ denote the sequence of random variables given by
	\begin{multline}\label{eq:phi sto}
	 \varphi_k :=  \|x_k-x\|^2  + 2\la\lr{B(x_k)-B(x_{k-1}), x-x_{k}}  +\frac{1}{2}\|x_{k}-x_{k-1}\|^2
	 \geq\frac{1}{2}\n{x_k-x}^2,
	\end{multline}
	where the latter inequality is due to \eqref{eq:lipschitzness of B
            for th:main}. Taking conditional expectation in
        Lemma~\ref{lem:decreasing sequence stoch} gives
	\begin{equation*}\label{eq:key inequality stoch}
	\mathbb{E}_k\left[ \varphi_{k+1} \right] +\e\mathbb{E}_k\left[ \|x_{k+1}-x_k\|^2 \right]
	\leq \varphi_k .
    \end{equation*}
    The supermartingale convergence theorem
    \cite[Theorem~1]{robbins1971convergence} then implies that, almost
    surely, $(\varphi_k)$ converges to a nonnegative-valued random
    variable $\varphi$ and that
    $\sum_{k=1}^\infty\mathbb{E}_k\bigl[\n{x_{k+1}-x_k}^2\bigr]<\infty$. The
    latter implies that $\n{x_{k+1}-x_k}^2\to0$ almost surely. From
    \eqref{eq:phi sto}, it then follows that $(x_k)$ is
    bounded almost surely and that $(\n{x_k-x}^2)$ converges almost
    surely to $\varphi$.

	Now, consider a realization $(x_k(\omega))$ of $(x_k)$ such
        that $\n{x_{k+1}(\omega)-x_k(\omega)}\to 0$ and
        $\varphi_k(\omega)\to\varphi(\omega)$ for some
        $\varphi(\omega)\geq 0$ (where $\varphi_k(\omega)$ denotes the
        corresponding realization of $\varphi_k$). Let
        $\overline{x}(\omega)$ be a sequential weak cluster point of
        the bounded sequence $(x_k(\omega))$. From \eqref{eq:the
            iteration stoch}, we have
	\begin{multline}\label{eq:inclusion in proof stoch}
	\frac{1}{\la}\big( x_{k-1}(\omega)-x_{k}(\omega) + \la\left( B(x_{k}(\omega))-B(x_{k-1}(\omega))\right)\\
	+ \la\left( B_{i_{k-1}}(x_{k-2}(\omega))-B_{i_{k-1}}(x_{k-1}(\omega))\right) \big) \in (A+B)(x_{k}(\omega))\qquad \forall k\geq 1.
	\end{multline}
	Since the graph of $A+B$ is demiclosed and $B_{1},\dots,B_n$
        are Lipschitz, taking the limit along a subsequence of
        $(x_k(\omega))$ which converges to $\overline{x}(\omega)$ in
        \eqref{eq:inclusion in proof stoch} yields
        $\bar{x}(\omega)\in(A+B)^{-1}(0)$. Altogether, we have that
        the weak sequential cluster points of $(x_k)$ are almost surely contained in
        $(A+B)^{-1}(0)$. An argument analogous to \cite[Proposition~2.3]{combettes2015stochastic} then shows that $(x_k)$ converges weakly almost surely to a $(A+B)^{-1}(0)$-valued random variable.
\end{proof}

\section{Concluding remarks}
In this work, we have proposed a modification of the forward-backward algorithm for finding a zero in the sum of two monotone operators which does not require cocoercivity.
To conclude, we outline three possible directions for further research into the method.

\textbf{Fixed point interpretations:} As the proof of the forward-reflected-backward method does not conform to the usual Krasnoselskii--Mann framework, it would be interesting to see if the method can be analyzed from the perspective of fixed point theory. To this end, consider the two operators $M,T\colon \Hilbert\times \Hilbert \to \Hilbert\times\Hilbert$ given by
$$M:=\begin{bmatrix} J_{\lambda A} & 0 \\ 0 & I \\  \end{bmatrix},\quad T:=\begin{bmatrix}
    I-2\la B & \la I \\ B & 0 \\ \end{bmatrix}.$$ By introducing the
auxiliary variable $u_{k+1}:=B(x_k)$, it is easy to see that
\eqref{eq:simplified iteration} may be expressed as the fixed point
iteration in $\Hilbert\times\Hilbert$ given by
$$ \binom{x_{k+1}}{u_{k+1}} = (M\circ T)\binom{x_k}{u_k}. $$
From the perspective of fixed point theory, it is not clear what properties the operator $M\circ T$ possesses which can be used to deduce convergence. For instance, although $M$ is \emph{firmly nonexpansive}, the operator $T$ need not be. A similar question regarding interpretations of the \emph{golden ratio algorithm}, for which the operator $M$ is of the same form,  was posed in \cite{malitsky2018golden}.

\textbf{Stochastic and coordinate extensions:} In large-scale problems, it is not always possible to evaluate the operator $B$ owing to its high computational cost. Two possibilities for reducing the computational requirements are  \emph{stochastic approximations} of $B(x_k)$ and \emph{block coordinate} variants of the algorithm. Both approaches work by employing low-cost approximation of $B(x_{k})$ in each iteration. It would be interesting to consider stochastic and coordinate extensions of the method proposed here.

\textbf{Acceleration schemes:} As explained in Section~\ref{s:intro},
the forward-reflected-backward method can be specialized to solve a
minimization problem involving the sum of two convex functions, one of
which is smooth. In 2007, in~\cite{nesterov07}, Nesterov exploited his
original idea from~\cite{nesterov1983method} to derive accelerated
proximal gradient methods that enjoy better complexity rates than the
standard forward-backward method.  It therefore seems reasonable that
the forward-reflected-backward method could be adapted to incorporate
a Nesterov-type acceleration.

\small

\paragraph{Acknowledgements.} The first author's research was supported by German Research Foundation grant SFB755-A4. The second author's research was supported in part by a Fellowship from the Alexander von Humboldt Foundation and in part by a Discovery Early Career Research Award from the Australian Research Council. The author would like to thank  associate editor and anonymous referees for their useful comments that have significantly
improved the quality of the paper.

\bibliographystyle{siam}
\bibliography{biblio}

\end{document}